\theoremstyle{plain}
\newtheorem{theorem}{Theorem}[section]
\newtheorem{lemma}[theorem]{Lemma}
\newtheorem{corollary}[theorem]{Corollary}
\newtheorem{proposition}[theorem]{Proposition}
\newtheorem{example}[theorem]{Example}
\newtheoremstyle{citing}{3pt}{3pt}{\itshape}{0pt}{\bfseries}%
{.}{ }{\thmnote{#3}}\theoremstyle{citing}
\newcommand{\PSL}{\operatorname{PSL}}
\newcommand{\PGL}{\operatorname{PGL}}
\newcommand{\SL}{\operatorname{SL}}
\newcommand{\Gal}{\operatorname{Gal}}
\newcommand{\GL}{\operatorname{GL}}
\begin{document}
\title{Families Of Elliptic Curves With The Same Mod 8 Representations}
\author{Zexiang Chen}
\date{}
\maketitle
\begin{abstract} Let $E:y^2=x^3+ax+b$ be an elliptic curve defined over $\mathbb{Q}$. We compute certain twists of the classical modular curve $X(8)$. Searching for rational points on these twists enables us to find non-trivial pairs of $8$-congruent elliptic curves over $\mathbb{Q}$, i.e. pairs of non-isogenous elliptic curves over $\mathbb{Q}$ whose $8$-torsion subgroups are isomorphic
as Galois modules. We also show that there are infinitely many examples over $\mathbb{Q}$.
\end{abstract}
\section{Introduction And Notation}
$\quad$Let $E,F$ be elliptic curves over $\mathbb{Q}$. For each $n \ge 1$, we say $E$ and $F$ are $n$-congruent with power $r$ if $E[n] \cong
F[n]$ as Galois modules and the Weil pairing is switched to the power of $r \in (\mathbb{Z}/n\mathbb{Z})^*$. That is, if $\phi:E[n] \to F[n]$ is a $G_\mathbb{Q}$-equivariant
isomorphism then $e^r_n(P,Q)=e_n(\phi P,\phi Q)$ for all $P,Q \in E[n]$.

For each elliptic curve $E$, the families of elliptic curves which are $n$-congruent to $E$ with power $r$ are parameterised by a modular curve
$X^r_E(n)$. That is, each non-cuspidal point on $X^r_E(n)$ corresponds to an isomorphism class $(F,\phi)$ where $F$ is an elliptic curve and $\phi:E[n] \to F[n]$ is a $G_\mathbb{Q}$-isomorphism which switches the Weil pairing to the power of $r$.
In fact we only need to focus on $r \in (\mathbb{Z}/n\mathbb{Z})^*$ mod squares. The families of elliptic curves parameterised by $X^1_E(n),n \le 5$
were computed by Rubin and Silverberg [RS] and
the existence and theoretical construction of $X^r_E(n)$ can be found in [S].

When $n \le 5$, $X^r_E(n)$ have genus $0$ and they have infinitely many rational points. The families of elliptic curves parameterised by $X^r_E(n)$ with $r \neq 1$ can be found in [F1], [F2]. When $n \ge 7$,
$X^r_E(n)$ have genus greater than $1$ and so there are only finitely many rational points on each of these.

One of the motivations to study the equations for $n \ge 7$ is to answer Mazur's question [M] which concerns whether there are any pairs of non-isogenous $n$-congruent curves. Motivated by Mazur's question, Kani and Schanz [KS] studied the geometry of the surface that parametrise pairs of $n$-congruent of elliptic curves. This prompted them to conjecture that for any $n \le 12$ there are infinitely many pairs of $n$-congruent non-isogenous elliptic curves over $\mathbb{Q}$. It is understood that we are looking for examples with distinct $j$-invariants, since otherwise from any single
example we could construct infinitely many by taking quadratic twists.

The conjecture in the case $n=7$ was proved by Halberstadt and Kraus [HK] where they gave explicit formula for $X^1_E(7)$. The equation of $X^{6}_E(7)$ was computed by Poonen, Schaefer and Stoll [PSS] where they study the equation of $X^{6}_E(7)$ to solve the Diophantine equation $x^2+y^3=z^7$.
The conjecture in the case $n=9,11$ was proved by Fisher [F3] where he gave explicit formula for $X^r_E(9)$ and $X^r_E(11)$ with
$r=\pm 1$.

In this paper we will give
equations for the $X^r_E(8), r=1,3,5,7$ and the families of elliptic curves parameterised by these modular curves. For convention we write
$X_E(8)$ as $X^1_E(8)$. In the last section we will discuss the relation between the equations we obtain and the classification of modular diagonal surfaces
as described in [KS] which then helps us to generate examples of pairs of non-isogenous $8$-congruent elliptic curves.

Another motivation for studying $n$-congruence of elliptic curves is the following. It was observed by Cremona and Mazur [CM] that if elliptic curves $E$ and
$F$ are $n$-congruent then the Mordell-Weil group of $F$ can sometimes be
used to explain elements of the Tate-Shafarevich group of $E$.

We fix our convention for the classical modular curve. Let $X(n)$ be the classical modular curve on which each non-cuspidal
point corresponds to an isomorphism class $(E,\phi)$ where $E$ is an elliptic curve and
$$\phi: \mathbb{Z}/n\mathbb{Z} \times \mu_n \cong E[n]$$
such that
$$e_n(\phi((a_1,\zeta_1),(a_2,\zeta_2)))=\frac{\zeta^{a_1}_2}{\zeta^{a_2}_1}.$$
Equivalently, each non-cuspidal point corresponds to an isomorphism class $(E,P,C)$ where $P$ is a primitive $n$-torsion point on $E$
and $C$ is a cyclic subgroup of $E$ of order $n$ which does not contain any multiple of $P$. Write
$Y(n)=X(n)\backslash\{\text{cusps}\}$.

We write $\bar{\mathbb{Q}}$ as the algebraic closure of $\mathbb{Q}$ and $G_\mathbb{Q}=\Gal(\bar{\mathbb{Q}}/\mathbb{Q})$.
For each $n \ge 1$ and any field $L$, write $K_n(L)$ as the function field of $X(n)/L$. We will always assume $L$ has characteristic not equal to $2$ or $3$. Let $\mu_n$ be the set
of $n$-th roots of unity. We will always write $\PSL_2(\mathbb{Z}/n\mathbb{Z}):=\SL_2(\mathbb{Z}/n\mathbb{Z})/\{\pm I\}$.

We state our main results. The families of elliptic curves parameterised by $X_E(4)$ and $X^3_E(4)$ will be given in the appendix.
\begin{theorem}
Let $E$ be an elliptic curve with equation $y^2=x^3+ax+b$.  Then
$X_E(8) \subset \mathbb{A}^4_{t,a_0,a_1,a_2}(\mathbb{Q})$
has equations $f_1=g_1=h_1=0$ where
\begin{align*}
f_1&=-aa^2_2+2a_0a_2+a^2_1+\frac{2}{9},\\
g_1&=-2aa_1a_2 - ba^2_2 + 2a_0a_1+\frac{2}{3}t,\\
h_1&=-2ba_1a_2 + a^2_0-t^2+\frac{a}{9},
\end{align*}
with forgetful map $X_E(8) \to X_E(4): (t,a_0,a_1,a_2) \mapsto t$.
\end{theorem}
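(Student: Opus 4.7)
The plan is to realise $X_E(8)$ as an explicit twist of the degree $8$ cover $X(8) \to X(4)$, following the twisting strategy used in [RS] and [F3] for other values of $n$. Starting from the parametrization of $X_E(4)$ by $t$ given in the appendix, one obtains for each $t$ an elliptic curve $F_t$ together with a $G_\mathbb{Q}$-equivariant isomorphism $\phi_4 \colon E[4] \to F_t[4]$ preserving the Weil pairing. A point of $X_E(8)$ above $t$ then corresponds to a lift of $\phi_4$ to an isomorphism $\phi_8 \colon E[8] \to F_t[8]$ with the same property. Since $|\SL_2(\mathbb{Z}/8)|/|\SL_2(\mathbb{Z}/4)| = 8$, the classical cover $X(8) \to X(4)$ has degree $8$, so I expect the fibre over $t$ to be cut out by three polynomial equations in three new variables, matching the shape of the proposed model.

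A useful first observation is that the three equations $f_1, g_1, h_1$ are precisely the coefficients of $1, x, x^2$ in the single congruence
\[
(a_0 + a_1 x + a_2 x^2)^2 \equiv \Bigl(t^2 - \tfrac{a}{9}\Bigr) - \tfrac{2}{3}tx - \tfrac{2}{9}x^2 \pmod{x^3 + ax + b}.
\]
In other words, writing $A = \mathbb{Q}[x]/(x^3 + ax + b)$ for the \'etale algebra attached to $E[2]\setminus\{O\}$, the theorem asserts that $u = a_0 + a_1 x + a_2 x^2 \in A$ is a square root of an explicit element of $A$ depending quadratically on $t$. This has the flavour of a $2$-descent condition over the $2$-torsion algebra of $E$, which fits the algebraic picture that $E[8]$ is an extension of $E[4]$ by a copy of $E[2]$: lifting $\phi_4$ to $\phi_8$ amounts, roughly, to adjoining a compatible square root along $E[2]$.

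The concrete strategy is therefore: (i) use the appendix's parametrization to write down the explicit $4$-torsion structure of $F_t$ via $\phi_4$; (ii) parametrize lifts of $\phi_4$ to $\phi_8$ by a single element $u \in A$ playing the role of an $8$-division value associated to a chosen basis element of $E[8]$; (iii) use the Weil pairing compatibility together with the duplication relation on $F_t$ to compute the exact element of $A$ that $u^2$ must equal, thereby producing the right-hand side of the displayed congruence. Expanding $u = a_0 + a_1 x + a_2 x^2$ in the basis $\{1, x, x^2\}$ and reducing modulo $x^3 + ax + b$ then yields the three relations $f_1 = g_1 = h_1 = 0$, with the forgetful map to $X_E(4)$ being projection to $t$.

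The main obstacle is pinning down the right-hand side of the congruence: the constants $2/9$, $2/3$, $a/9$ are sensitive to the normalisation of $t$ in the appendix's model of $X_E(4)$ and to the specific Weierstrass form used for $F_t$, and must be tracked carefully through the duplication map and the Weil pairing calculation. Once the equations are in hand, verification reduces to routine checks: that the ideal $(f_1, g_1, h_1)$ is prime of dimension $1$, that the projection to $t$ has generic degree $8$, and that over $\bar{\mathbb{Q}}$ one recovers $X(8)$ with its $\SL_2(\mathbb{Z}/8)$-action, so that the Galois descent data coming from the twist by $E$ matches the moduli description of $X_E(8)$.
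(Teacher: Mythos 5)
Your structural observation is exactly right and matches the paper's setup: the three equations are the coefficients of $1,x,x^2$ in a single congruence $(a_0+a_1x+a_2x^2)^2 \equiv Q_t(x) \pmod{x^3+ax+b}$ for a quadratic-in-$t$ element $Q_t$ of the \'etale algebra $L=\mathbb{Q}[x]/(x^3+ax+b)$, and this reflects the fact that $X(8)\to X(4)$ is a $(\mathbb{Z}/2\mathbb{Z})^3$-cover. This is precisely the paper's Corollary~3.4. However, there is a genuine gap in how you propose to determine $Q_t$. The paper pins it down in two steps that your plan does not supply. First, since the cover $X(8)\to X(4)$ is branched exactly over the cusps and twisting does not change the branch locus, each of the three conjugate quadratics must vanish exactly at a conjugate pair of cusps of $X_E(4)$; computing those cusps (Lemma~3.2 gives $t_{2j-1}+t_{2j}=-\tfrac{2}{3}\theta_j$ and $t_{2j-1}t_{2j}=-\tfrac{1}{9}(2\theta_j^2+a)$) is where the constants $\tfrac{2}{9},\tfrac{2}{3},\tfrac{a}{9}$ actually come from --- not from a duplication computation on $F_t$.

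Second, and more seriously, knowing the zero locus only determines $Q_t$ up to multiplication by a class in $L^*/(L^*)^2$ (the ``scaling factors'' $\alpha_{1,j}$ of the paper's Lemma~3.3), and this ambiguity is exactly the difference between $X_E(8)$ and $X^5_E(8)$: both are covers of the same $X_E(4)$ with the same branch locus, and Theorem~1.2 differs from Theorem~1.1 precisely by the factor $D=-4a^3-27b^2$. Your proposal contains no mechanism for selecting the correct square class; you fold this into ``constants sensitive to normalisation that must be tracked carefully,'' which understates the issue. The paper resolves it with one clean observation you do not make: $X_E(8)$ carries the tautological rational point $(E,[1])$ lying above $t=\infty$ on $X_E(4)$, and substituting the corresponding projective point $(1:1:0:0:0)$ forces the scaling factors to be squares, hence $\alpha_{1,j}=1$. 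Your alternative of computing the $8$-division values of $F_t$ directly via duplication and the Weil pairing could in principle determine the class as well, but that is the entire content of the theorem and is left uncarried out; as written, the argument cannot distinguish the model of $X_E(8)$ from that of $X^5_E(8)$.
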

\begin{theorem}
Let $E$ be an elliptic curve with equation $y^2=x^3+ax+b$ and $D=-4a^3-27b^2$. Then
$X^5_E(8) \subset \mathbb{A}^4_{t,a_0,a_1,a_2}(\mathbb{Q})$
has equations $f_5=g_5=h_5=0$ where
\begin{align*}
f_5&=-aa^2_2 + 2a_0a_2 + a^2_1+\frac{2D}{9},\\
g_5&=-2aa_1a_2 - ba^2_2 + 2a_0a_1+\frac{2D}{3}t,\\
h_5&=-2ba_1a_2 + a^2_0+D(-t^2+\frac{a}{9}).
\end{align*}
with forgetful map $X^5_E(8) \to X_E(4): (t,a_0,a_1,a_2) \mapsto t$.
\end{theorem}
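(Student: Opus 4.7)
I would prove Theorem~1.2 by a direct quadratic-twist reduction to Theorem~1.1, exploiting the close structural similarity between the two sets of equations. A routine one-line check shows that the substitution $(t, a_0, a_1, a_2) \mapsto (t, \sqrt{D}\, a_0, \sqrt{D}\, a_1, \sqrt{D}\, a_2)$ sends $(f_5, g_5, h_5)$ to $D\cdot(f_1, g_1, h_1)$. Consequently the map $\phi: X_E(8) \to X^5_E(8)$ given by this substitution is an isomorphism over $\mathbb{Q}(\sqrt{D})$, and its descent cocycle $\sigma \mapsto \phi^{-1}\sigma(\phi)$ is the involution $\iota: (t, a_0, a_1, a_2) \mapsto (t, -a_0, -a_1, -a_2)$. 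Thus, as a $\mathbb{Q}$-scheme, $X^5_E(8)$ is the twist of $X_E(8)$ by the pair $(\iota, \chi_D)$, where $\chi_D$ is the quadratic character of $\mathbb{Q}(\sqrt{D})/\mathbb{Q}$. Because $\iota$ fixes the coordinate $t$, the forgetful map of Theorem~1.1 descends to the claimed map $X^5_E(8) \to X_E(4),\ (t, a_0, a_1, a_2) \mapsto t$.

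The remaining step is to verify that this geometric twist coincides with the moduli-theoretic definition of $X^5_E(8)$ from [S]. In moduli terms, $X^r_E(8)$ is constructed as a twist of the classical $X(8)$ by a cocycle in $H^1(G_{\mathbb{Q}}, \operatorname{Aut} X(8))$ encoding both the representation $\rho_{E,8}$ and the Weil-pairing power $r$. Passing from $r=1$ to $r=5$ multiplies this cocycle by a class whose determinant component is $5 \in (\mathbb{Z}/8\mathbb{Z})^*$. Since $\det \rho_{E,8}$ is the mod-$8$ cyclotomic character, the discrepancy between the two cocycles factors through the quadratic character cut out by $\mathbb{Q}(\sqrt{D})/\mathbb{Q}$, that is, through $\chi_D$. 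The geometric involution $\iota$ is then the incarnation, on the coordinates $(t, a_0, a_1, a_2)$, of the corresponding diamond-type automorphism of $X_E(8)$.

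The principal obstacle is this last identification: establishing that the automorphism of $X_E(8)$ of determinant $5$ acts on the coordinates $(t, a_0, a_1, a_2)$ exactly as $\iota$. This requires tracking how $(t, a_0, a_1, a_2)$ encode the universal pair $(F, \phi)$, information that should come out of whatever construction proves Theorem~1.1. Should that modular interpretation be awkward to extract directly, a safer but more laborious fallback is to redo the derivation of Theorem~1.1 from scratch with Weil-pairing power set to $5$ throughout; the factor $D$ will enter automatically through the formulae for the Weil pairing (and the Weierstrass differential $dx/y$), accounting for the $\tfrac{2D}{9}$, $\tfrac{2D}{3}t$, and $D(-t^2+\tfrac{a}{9})$ corrections in $f_5, g_5, h_5$. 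I would pursue the twist route first, since it recovers Theorem~1.2 essentially for free from Theorem~1.1.
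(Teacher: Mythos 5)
Your opening observation is correct and consistent with the paper: the substitution $(t,a_0,a_1,a_2)\mapsto(t,\sqrt{D}a_0,\sqrt{D}a_1,\sqrt{D}a_2)$ does carry $(f_5,g_5,h_5)$ to $D\cdot(f_1,g_1,h_1)$, so the stated variety is the quadratic twist of the Theorem~1.1 variety by $D$ with respect to the involution $\iota$. But the step you flag as "the remaining step" is where the actual content lies, and your justification for it does not work. The claim that the discrepancy cocycle between $X^5_E(8)$ and $X_E(8)$ "factors through $\chi_D$ because $\det\rho_{E,8}$ is the mod-$8$ cyclotomic character" is a non sequitur: the cyclotomic character mod $8$ cuts out $\mathbb{Q}(\zeta_8)$, whose quadratic subfields are $\mathbb{Q}(i)$ and $\mathbb{Q}(\sqrt{\pm 2})$, and these have nothing to do with $\mathbb{Q}(\sqrt{D})=\mathbb{Q}(\sqrt{\Delta_E})$ in general. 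The field $\mathbb{Q}(\sqrt{D})$ enters through the mod-$2$ representation, not through the determinant. Moreover, the group of deck transformations of $X_E(8)\to X_E(4)$ is $(\mathbb{Z}/2\mathbb{Z})^3$, not $\{\pm 1\}$, so a priori the twist class need not even be a single quadratic twist by $\iota$; assuming that form presupposes part of the answer.

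The paper supplies exactly the two inputs your sketch is missing. First (Lemma 4.2): the map $P\mapsto 5P$, $Q\mapsto Q$ on $E[8]$ has determinant $5$ and is $G_{\mathbb{Q}(E[2])}$-equivariant, because any $s$ fixing $E[2]$ has matrix with even off-diagonal entries. This produces a $\mathbb{Q}(E[2])$-rational point on $X^5_E(8)$ above $t=\infty$, hence forces each scaling factor $\alpha_{5,j}$ to be a square in $\mathbb{Q}(E[2])$; since $\mathbb{Q}(\sqrt{D})$ is the unique quadratic subfield of $\mathbb{Q}(E[2])$ (for $x^3+ax+b$ irreducible), each $\alpha_{5,j}$ is $1$ or $D$ modulo squares in $L_j$. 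Second (Lemma 4.3): a case analysis on the possible matrices of determinant $5$ acting trivially on $E[4]$ shows that if $X^5_E(8)\cong X_E(8)$ as covers of $X_E(4)$ over $K$ then $\Delta_E$ is a square in $K$, which rules out the trivial class and pins $\alpha_{5,j}=D$. Without these two steps your argument identifies the shape of the answer but never proves that the twist class is $\chi_D$ rather than trivial or something else entirely; your proposed fallback of "redoing the derivation with power $5$" is essentially what the paper does, but you have not executed it.
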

\begin{theorem}
Let $E$ be an elliptic curve with equation $y^2=x^3+ax+b$. Then
$X^3_E(8) \subset \mathbb{A}^4_{t,a_0,a_1,a_2}(\mathbb{Q})$
has equations $f_3=g_3=h_3=0$ where
\begin{align*}
f_3&=-\frac{2}{9}a^2 + 6at^2 + 6bt-(-aa^2_2 + 2a_0a_2 + a^2_1),\\
g_3&=\frac{4}{3}a^2t + \frac{1}{3}ab - 9bt^2-(-2aa_1a_2 - ba^2_2 + 2a_0a_1),\\
h_3&=-\frac{4}{9}a^3 + 4a^2t^2 + 4abt - 2b^2-( -2ba_1a_2 + a^2_0).
\end{align*}
with forgetful map $X^3_E(8) \to X^3_E(4): (t,a_0,a_1,a_2) \mapsto t$.
\end{theorem}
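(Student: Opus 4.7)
The plan is to follow the framework used for Theorems~1.1 and~1.2, modifying only the twisting data so as to impose the condition $e_8^3(P,Q) = e_8(\phi P, \phi Q)$. The general strategy is: (i)~fix a Galois-equivariant affine model of the classical modular curve $X(8)$ together with its forgetful map $X(8) \to X(4)$; (ii)~describe the $G_{\mathbb{Q}}$-action on this model coming from the mod-$8$ representation $\rho_{E,8}$ of $E$ with the Weil pairing rescaled by~$3$; and (iii)~take $G_{\mathbb{Q}}$-invariants to read off the defining equations of $X^3_E(8)$. The coordinate $t$ is then the pullback of the Hauptmodul on $X^3_E(4)$ (consistent with $3 \equiv 3 \pmod 4$), so that the forgetful map $X^3_E(8) \to X^3_E(4)$ is literally projection to $t$.

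Concretely, I would start from the same explicit universal model used in Theorem~1.1, in which the coordinates $(t, a_0, a_1, a_2)$ on $X(8)$ have a clean modular interpretation: $t$ is a level-$4$ parameter and $a_0, a_1, a_2$ encode the coordinates of a lift under $[2]\colon \mathcal{E}[8] \to \mathcal{E}[4]$ of a universal $4$-torsion point, expanded against a Weierstrass basis of the universal elliptic curve. In this model the relations cutting out $X(8)$ inside $\mathbb{A}^4$ are three quadratic polynomials in $a_0, a_1, a_2$ whose inhomogeneous parts are polynomial in $a, b, t$. Applying the untwisted descent with respect to $\rho_{E,8}$ produces $f_1, g_1, h_1$ of Theorem~1.1, while rescaling the Weil pairing by~$5$ inserts the discriminant $D = -4a^3 - 27b^2$ into the inhomogeneous terms and gives Theorem~1.2.

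For $r=3$ one repeats the calculation with a cocycle whose image in $\GL_2(\mathbb{Z}/8\mathbb{Z})$ consists of matrices of determinant~$3$. Since $3 \not\equiv \pm 1 \pmod 8$, this twist is neither trivial nor a simple quadratic twist of $E$, so the induced $G_{\mathbb{Q}}$-action on $(a_0, a_1, a_2)$ mixes the level-$8$ data with the Weierstrass data $(a, b)$ and the level-$4$ parameter $t$ in a non-trivial way. Descending to invariants then converts the constant-in-$a_i$ right-hand sides $-\tfrac{2}{9}$, $-\tfrac{2}{3}t$, $t^2 - \tfrac{a}{9}$ of $f_1, g_1, h_1$ into the polynomials $-\tfrac{2}{9}a^2 + 6at^2 + 6bt$, $\tfrac{4}{3}a^2 t + \tfrac{1}{3}ab - 9bt^2$, $-\tfrac{4}{9}a^3 + 4a^2 t^2 + 4abt - 2b^2$, which are precisely the additional terms appearing in $f_3, g_3, h_3$.

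The main obstacle is performing the Galois descent explicitly for the $r=3$ twist and tracking the resulting dependence on $t$, $a$, and $b$. Unlike the $r=1$ and $r=5$ cases, where the relevant cocycle is central (up to the quadratic twist by $D$), the $r=3$ cocycle has a genuinely non-central image at level~$8$, and matching up the $a, b, t$-dependent shift with the coordinate ring of $X^3_E(4)$ from the appendix is the most delicate part of the calculation. Once the equations are written down, one completes the argument by checking compatibility of the forgetful map with the appendix's model of $X^3_E(4)$, verifying the dimension and (generic) irreducibility of the resulting scheme in $\mathbb{A}^4$, and specialising to a known $3$-congruent pair of elliptic curves as a sanity check.
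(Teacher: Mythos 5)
Your high-level outline (twist the universal level-$8$ structure by a cocycle of determinant $3$ and take Galois invariants) points in the right general direction, but it stops exactly where the actual proof begins, and the step you defer as ``the most delicate part'' is in fact the entire content of the argument. The paper does not perform a direct descent on a universal model for $r=3$. Instead it first reduces the whole problem (Lemma~3.3, Corollary~3.4) to determining three \emph{scaling factors} $\alpha_{3,j}\in L_j^*/(L_j^*)^2$, where $L_j=\mathbb{Q}(\theta_j)$ and the coordinates $a_0,a_1,a_2$ are the coefficients of $w_j=\sqrt{\alpha_{3,j}(t-t_{2j-1})(t-t_{2j})}$ expanded in the basis $1,\theta_j,\theta_j^2$ of $L=\mathbb{Q}[x]/(x^3+ax+b)$ --- not coordinates of a lift of a $4$-torsion point against a Weierstrass basis, as you describe. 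The twist datum is then identified with a class in $H^1(G_{\mathbb{Q}},H)\cong L^*/(L^*)^2$ via Shapiro's lemma and Hilbert~90 (Lemma~5.4), and the scaling factors are read off from the image of $r$ under the connecting map $(\mathbb{Z}/8\mathbb{Z})^*\to H^1(G_{\mathbb{Q}},H)$ (Lemma~5.5). Your proposal contains no mechanism for computing this class for $r=3$, and simply asserts that the descent ``converts'' the constants of $f_1,g_1,h_1$ into the stated polynomials in $a,b,t$; that assertion is the conclusion, not a derivation.

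The two ideas you are missing are: (i) the explicit cocycle computation for $r=7$, where one shows via the action of generators of $\GL_2(\mathbb{Z}/8\mathbb{Z})$ on the square roots $\sqrt{\delta_j}$ (with $\delta_j$ the products of root differences coming from the $4$-division polynomial, Lemmas~6.1--6.3) that $\alpha_{7,j}=\delta_j$; and (ii) the multiplicativity of the connecting map, which gives $\alpha_{3,j}=\alpha_{5,j}\cdot\alpha_{7,j}=D\,\delta_j$ in $L_j^*/(L_j^*)^2$ since $3\equiv 5\cdot 7\pmod 8$, after which one removes square factors and compares coefficients of $1,\theta_j,\theta_j^2$ in $\alpha_{3,j}(t-t_{2j-1})(t-t_{2j})=(a_0+a_1\theta_j+a_2\theta_j^2)^2$ to obtain $f_3,g_3,h_3$. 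Without (i) and (ii), or some substitute for them, the descent you propose cannot be carried out, so the proposal as written has a genuine gap.
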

\begin{theorem}
Let $E$ be an elliptic curve with equation $y^2=x^3+ax+b$. Then
$X^7_E(8) \subset \mathbb{A}^4_{t,a_0,a_1,a_2}(\mathbb{Q})$
has equations $f_7=g_7=h_7=0$ where
\begin{align*}
f_7&=3t^2+\frac{a}{9}-aa^2_2 + 2a_0a_2 + a^2_1,\\
g_7&=\frac{4}{3}at+\frac{2}{3}b-2aa_1a_2 - ba^2_2 + 2a_0a_1,\\
h_7&=at^2+2bt-\frac{1}{9}a^2-2ba_1a_2 + a^2_0.
\end{align*}
with forgetful map $X^7_E(8) \to X^3_E(4): (t,a_0,a_1,a_2) \mapsto t$.
\end{theorem}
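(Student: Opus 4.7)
The proof should parallel those of Theorems 1.1--1.3. First, the assertion that the forgetful map lands in $X^3_E(4)$ is forced by the congruence $7 \equiv 3 \pmod{4}$: any $G_\mathbb{Q}$-isomorphism $E[8] \to F[8]$ twisting the Weil pairing by $7$ restricts to an isomorphism $E[4] \to F[4]$ twisting by $3$, so only $X^3_E(4)$ can receive the forgetful map.

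The plan is to start from a model of the cover $X(8) \to X(4)$ that was presumably built in the proof of Theorem 1.1. Over the base, this degree-$8$ cover is presented by three auxiliary coordinates $a_0, a_1, a_2$ on the fibre satisfying three quadratic relations; these coordinates should arise from a choice of modular functions (for instance, traces of theta functions or ratios of division polynomials evaluated at suitable torsion points) generating the function field extension $K_8/K_4$. To obtain $X^r_E(8)$ one transports these equations through the Galois cocycle describing the action on $E[8]$, modified by the $r$-th power of the Weil-pairing character. For $r = 7$ this is the anti-symplectic twist, which composes the usual action with the transpose-inverse on $E[8]$; its effect on $(a_0, a_1, a_2)$ is to flip certain signs dictated by the pairing normalisation.

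Concretely, I would compute the Galois action on $(a_0, a_1, a_2)$ after base change to $\mathbb{Q}(\zeta_8)$, fix a symplectic basis of $E[8]$ to normalise the Weil pairing, and descend to obtain equations over $\mathbb{Q}$. A useful consistency check is to compare with Theorem 1.3: both $X^3_E(8)$ and $X^7_E(8)$ fibre over $X^3_E(4)$, so their defining polynomials should share a common underlying structure and differ only in lower-order corrections reflecting the two inequivalent $8$-torsion lifts of the same $4$-torsion twist. Indeed, the pure quadratic parts in $(a_0, a_1, a_2)$ of $f_3, g_3, h_3$ and of $f_7, g_7, h_7$ agree up to an overall sign, while the terms involving $t$, $a$, $b$ differ in exactly the way one would expect from a change of twist.

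The main obstacle will be extracting the explicit clean quadratic forms from the abstract cocycle description. The quadratic leading terms in $(a_0,a_1,a_2)$ are relatively easy to track, but the linear and constant adjustments in $t$, $a$, $b$ require careful bookkeeping with the pairing normalisation and with the specific generators chosen for $K_8/K_4$. Once candidate equations are in hand, one must verify that the ideal $(f_7, g_7, h_7)$ is radical, cuts out a geometrically irreducible surface of the expected dimension in $\mathbb{A}^4$, and genuinely classifies $7$-congruent pairs; this last point can be confirmed numerically on explicit examples by checking that the parametrised curve $F$ satisfies $F[8] \cong E[8]$ as $G_\mathbb{Q}$-modules with Weil pairing raised to the seventh power.
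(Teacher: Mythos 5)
Your overall strategy (realise $X^7_E(8)$ as a twist of $X_E(8)$ by a cocycle built from an anti-symplectic automorphism of $E[8]$, then descend) is the same one the paper uses, and your observations about the forgetful map landing in $X^3_E(4)$ and about the sign relation between the quadratic parts of $f_3,g_3,h_3$ and $f_7,g_7,h_7$ are correct. But the step you defer as ``the main obstacle'' --- extracting the explicit equations from the abstract cocycle --- is the entire content of the proof, and your proposal contains no mechanism for carrying it out. The paper's argument hinges on a chain of reductions you do not supply. First, the function field of $X^r_E(8)$ over $\mathbb{Q}(E[2])$ is generated over that of $X^{\bar r}_E(4)$ by three square roots $\sqrt{\alpha_{r,j}(t-t_{2j-1})(t-t_{2j})}$, where $t_1,\dots,t_6$ are the cusps, so the whole problem collapses to determining three scaling factors $\alpha_{7,j}$ in $L_j^*/(L_j^*)^2$ with $L_j=\mathbb{Q}(\theta_j)$ (Lemma 3.3, Corollary 3.4); the coordinates $a_0,a_1,a_2$ are then just the $\mathbb{Q}$-coefficients of $\sqrt{\alpha_{7,j}(t-t_{2j-1})(t-t_{2j})}=a_0+a_1\theta_j+a_2\theta_j^2$, not modular functions whose Galois action needs to be tracked directly. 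Second, the relevant twists are classified by $H^1(G_\mathbb{Q},H)$ with $H\cong(\mathbb{Z}/2\mathbb{Z})^3$ the kernel of $\PSL_2(\mathbb{Z}/8\mathbb{Z})\to\PSL_2(\mathbb{Z}/4\mathbb{Z})$, and the key structural input is the $G_\mathbb{Q}$-isomorphism $H\cong\mathrm{Map}(E[2]\setminus\{O\},\mu_2)$, which via Shapiro's lemma and Hilbert 90 identifies $H^1(G_\mathbb{Q},H)$ with $L^*/(L^*)^2$, $L=\mathbb{Q}[x]/(x^3+ax+b)$. Without this identification there is no way to convert the cocycle $s\mapsto({}^sv')v'^{-1}$ into a field element.

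Third, and most concretely, the answer is $\alpha_{7,j}=\delta_j$ where $\delta_1=(\theta_1-\theta_2)(\theta_3-\theta_1)$ and cyclically; this is proved by noting that the primitive $4$-torsion points of $E$ have $x$-coordinates $\theta_j\pm i\sqrt{\delta_j}$, computing the Galois action of explicit generators of $\GL_2(\mathbb{Z}/8\mathbb{Z})$ on the $\sqrt{\delta_j}$ through their action on $E[4]$, and matching the resulting cocycle in $\mathrm{Map}(E[2]\setminus\{O\},\mu_2)$ against the matrices $({}^{s_j}v')v'^{-1}$. Your proposal gives no candidate for the $\alpha_{7,j}$ and no route to one; ``flip certain signs dictated by the pairing normalisation'' does not determine an element of $L^*/(L^*)^2$. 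Two smaller points: $X^7_E(8)$ is a curve, not a surface, so your final verification step is misstated; and the numerical check that some parametrised $F$ is $8$-congruent to $E$ with power $7$ would validate examples but cannot establish that the displayed equations define the modular curve itself.
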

\begin{flushleft}
{\bf Remark}. The families of elliptic curves parameterised by $X^r_E(8)$ can be read off from the families of elliptic curves parameterised by
$X^{\bar{r}}_E(4)$ via the forgetful map $X^r_E(8) \to X^{\bar{r}}_E(4)$ where $\bar{r}=r$ mod $4$.
\end{flushleft}

We will give basic properties of the modular curve $X(8)$ in Section 2. In Section 3 we will describe the function fields of $X^r_E(8)$ over $\mathbb{Q}$
for each $r=1,3,5,7$. Then we will prove Theorem 1.1 and 1.2 in Section 4, based on the observations in Section 3 and the fact there is always a rational
point on $X_E(8)$. The proofs of Theorem 1.3 and 1.4 require some cocycle
calculations, which we will give in Section 5, and we will prove Theorem 1.3
and 1.4 in Section 6.

\begin{flushleft}
{\bf Acknowledgement}
\end{flushleft}
$\quad$I would like to thank Tom Fisher for useful discussion and advice. Most symbolic computations were done by MAGMA [MAG].
\section{The Modular Curve X(8)}
$\quad$In this section we give the basic properties of the curve $X(8)$. We start by introducing the structure of $X(4)$. Fix a primitive $4$th root of unity $i$. It is well-known (see for example [S]) that the modular curve
$X(4)$ can be identified with $\mathbb{P}^1$ by
$$(E_u,P_u,C_u) \mapsto u$$
where
$$E_u: y^2=x^3-27(256u^8+224u^4+1)x-54(-4096u^{12}+8448u^8+528u^4-1)$$
is an elliptic curve, $P_u=(48u^4 - 144u^3 + 72u^2 - 36u + 3,1728u^5 - 1728u^4 + 864u^3- 432u^2 + 108u)$
is a primitive $4$-torsion point and $C_u$ is generated by $Q_u=(48u^4 - 15,i(864u^4 - 54))$. The cusps of $X(4)$ are points satisfying $u(16u^4-1)=0$ and $u=\infty$.

Take the model $E_u$ and let $x_1$ and $x_2$ be the $x$-coordinates of any half point of $P_u$ and $Q_u$ respectively. They satisfy the vanishing of the following polynomials
\begin{align*}
f&=(x_1-48u^4 + 144u^3 - 72u^2 + 36u - 3)^4\\
&~+1296u(2u-1)^4(4u^2+1)(x_1 - 48u^4 - 72u^2 - 3)^2,\\
g&=(x_2-48u^4+15)^4+1296(16u^4-1)(x_2+96u^4+6)^2.\\
\end{align*}
Solving these directly we conclude that the function field of $X(8)/L$ is
$$K_8(L):=L(u, \sqrt{u^2-1/4}, \sqrt{u^2+1/4},\sqrt{-u})$$ for any field $L$ containing $\mu_8$ and so if $\zeta$ is a fixed $8$th root of unity then a model of $X(8)$ in $\mathbb{A}^4_{u,X_1,X_2,X_3}/L$ is given by
$$X^2_1=u^2-\frac{1}{4},X^2_2=u^2+\frac{1}{4},X^2_3=-u.$$
The projective closure of this is a smooth curve of genus $5$ and the families of elliptic curves parameterized by $X(8)$ are
$$E_{u,X_1,X_2,X_3}: y^2=x^3-27(256u^8+224u^4+1)x-54(-4096u^{12}+8448u^8+528u^4-1)$$
together with a $G_\mathbb{Q}$-invariant $8$-torsion point
$P_{u,X_1,X_2,X_3}=(P_x,P_y)$ and a $G_\mathbb{Q}$-invariant cyclic subgroup $\langle
Q_{u,X_1,X_2,X_3}=(Q_x,Q_y) \rangle$
where
\begin{align*}
P_x&=-36(4X^5_3 + 4X^4_3 + 4X^3_3 + 2X^2_3 + X_3)X_2 + 48X^8_3 +144X^7_3+144X^6_3\\
& + 72X^5_3 + 72X^4_3 + 36X^3_3 + 36X^2_3 + 18X_3 + 3,\\
P_y&=108(16X^9_3 + 32X^8_3 + 32X^7_3 + 32X^6_3 + 24X^5_3 + 16X^4_3 +
    8X^3_3 +4X^2_3\\
&+ X_3)X_2-1728X^{11}_3 - 3456X^{10}_3 - 4320X^9_3- 3456X^8_3 - 2592X^7_3\\
&- 1728X^6_3- 1296X^5_3 - 864X^4_3 - 540X^3_3 -216X^2_3 - 54X_3,\\
Q_x&=-72\zeta^2X_1X_2 + (72(\zeta^3 + \zeta)X^4_3 + 18(\zeta^3 +\zeta))X_1+(72(\zeta^3 -\zeta)X^4_3 \\
&- 18(\zeta^3 - \zeta))X_2+ 48X^8_3 - 15,\\
Q_y&=432X_1X_2 + (864(-\zeta^3 + \zeta)X^8_3 + 432(\zeta^3 - \zeta)X3^4+ 162(\zeta^3 -\zeta))X_1 \\
&+ ((-864\zeta^3 - 864\zeta)X^8_3 + 432(-\zeta^3 - \zeta)X^4_3 + 162(\zeta^3 +\zeta))X_2\\
& + 1728\zeta^2X^8_3 - 108\zeta^2.
\end{align*}
The forgetful morphism $X(8) \to X(4)$ is given by $(u,X_1,X_2,X_3) \mapsto u$. In particular, this is only ramified above the cusps with ramification degree $2$.
The function field of $X(8)$ is obtained by adjoining the square roots of three rational functions of degree $2$ on $X(4)$ and the zeroes of these rational functions are the cusps of $X(4)$.

Let $G_n:=\PSL_2(\mathbb{Z}/n\mathbb{Z})$. It is well-known that $\Gal(K_n(\mathbb{C})/K_1(\mathbb{C})) \cong G_n$. Let $H=\Gal(K_8(\mathbb{C})/K_4(\mathbb{C}))$ then we have an exact sequence
$$1 \to H \to G_8 \to G/H \cong G_4 \to 1$$
and so $H \cong (\mathbb{Z}/2\mathbb{Z})^3$. The group $G_n$ acts on
$X(n)$ by relabeling the $n$-torsion points. Explicitly, for each $\alpha \in G_n$ and any point $(E,\phi) \in Y(n)$, $\alpha$ acts on $(E,\phi)$ by
$$\alpha \circ (E,\phi)=(E,\alpha\circ \phi).$$

\section{The Modular Elliptic Curves}
$\quad$
We firstly recall some results of the level four structure and introduce the algorithm to compute $X_E(4)$ in [F1].
Let $E:y^2=x^3+ax+b$ be an elliptic curve and write $c_4=-\frac{a}{27},c_6=-\frac{b}{54}$.
Take homogenous coordinate $(u:v)$ for $X(4)$ and define
$$c_4(u,v)=256u^8 + 224u^4v^4 + v^8,c_6(u,v)=-4096u^{12} + 8448u^8v^4 + 528u^4v^8 - v^{12}$$

Let $T=uv(16u^4-v^4)$ and $T_u,T_v$ be the partial derivative of $T$
with respect to $u,v$ respectively. Now pick $u,v \in \mathbb{C}$ such that $c_4(u,v)=c_4,c_6(u,v)=c_6$.  Then as is shown in [F1], Lemma 8.4 and Theorem 13.2, the isomorphism $X_E(4) \to X(4)$ is given by
fractional linear map represented by the matrix
$$\begin{pmatrix} u&-T_v\\v&T_u \end{pmatrix}$$
and so the isomorphism $X(4) \to X_E(4)$ is given by fractional linear map
represented by the matrix
$$\begin{pmatrix} T_u&T_v\\-v&u \end{pmatrix}.$$
Under this isomorphism the point $\infty$ on $X_E(4)$ corresponds to $E$ itself. From now on we will identify $X_E(4)$ with $\mathbb{P}^1$ by this isomorphism.

Further, based on the observation in [F1] page 31, we conclude that the curve $X^3_E(4)$ can be chosen to be the same as $X_E(4)$ (with the same isomorphism to $X(4)$) in the sense that if we pick affine coordinate $\mathbb{A}^1_t$ for $X_E(4)$ and
$$E_t: y^2=x^3-27a_E(t)x-54b_E(t)$$ are families of elliptic curves parameterised by $X_E(4)$ then the families of elliptic curves parameterised by $X^3_E(4)$ are
$$E'_t:=E^{\Delta_E}_t: y^2=x^3-27\Delta^2_Ea_E(t)x-54\Delta^3_Eb_E(t).$$ From now on we will write these
to be the families of elliptic curves parameterised by $X^3_E(4)$ and we will
give the expressions of $a_E(t)$ and $b_E(t)$ in the appendix. In fact this identification can be explained by the following
lemma.
\begin{lemma} Let $E$ be an elliptic curve and $E^{\Delta_E}$ be the quadratic
twist of $E$ by its discriminant $\Delta_E$. Let $\gamma:E \to E^{\Delta_E}$ be the natural isomorphism
$$(x,y) \mapsto (x\Delta_E,y\Delta^{\frac{3}{2}}_E).$$
Let $p',q'$ be the image of $p,q$ respectively. Then the map $\phi:E[4] \to E^{\Delta_E}[4]$
$$\phi(p)= p'+2q', \phi(q)=2p'+3q'$$
is a $G_\mathbb{Q}$-equivariant isomorphism.
\end{lemma}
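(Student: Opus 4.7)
The plan is to reduce the Galois-equivariance of $\phi$ to a matrix congruence modulo $4$, then verify it via the group-theoretic structure of $\GL_2(\mathbb{F}_2)\cong S_3$. Extending $\phi$ by $\mathbb{Z}/4\mathbb{Z}$-linearity in the basis $(p,q)$, we may write $\phi=\gamma\circ M$, where $M\in\mathrm{End}(E[4])$ has matrix $\bigl(\begin{smallmatrix}1&2\\2&3\end{smallmatrix}\bigr)$ in that basis. Note $\det M\equiv 3\pmod 4$, which is exactly what one expects: $\gamma$ preserves the Weil pairing while $M$ scales it by $\det M$, so $\phi$ switches the pairing to the cube, the $r=3$ alteration demanded by $X^3_E(4)$. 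From $\gamma(x,y)=(\Delta x,\Delta^{3/2}y)$ with $\Delta=\Delta_E$, which is defined over $\mathbb{Q}(\sqrt{\Delta})$, one reads off that $\sigma(\gamma(R))=\chi(\sigma)\,\gamma(\sigma R)$ for every $\sigma\in G_\mathbb{Q}$ and every $R\in E$, where $\chi$ is the quadratic character of $\mathbb{Q}(\sqrt{\Delta})/\mathbb{Q}$. Writing $A_\sigma\in\GL_2(\mathbb{Z}/4)$ for the matrix of $\sigma$ on $E[4]$, the identity $\sigma\circ\phi=\phi\circ\sigma$ unwinds to
\begin{equation*}
\chi(\sigma)\,A_\sigma M\equiv MA_\sigma\pmod 4.
\end{equation*}

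I would then invoke the classical fact that $\mathbb{Q}(\sqrt{\Delta_E})$ coincides with the subfield of $\mathbb{Q}(E[2])$ fixed by the alternating subgroup of $\mathrm{Gal}(\mathbb{Q}(E[2])/\mathbb{Q})$; under $\GL_2(\mathbb{F}_2)\cong S_3$ this identification gives $\chi(\sigma)=\mathrm{sign}(\bar A_\sigma)$ where $\bar A_\sigma:=A_\sigma\bmod 2$. Hence the lemma reduces to the purely matrix-theoretic claim
\begin{equation*}
AM\equiv\mathrm{sign}(\bar A)\cdot MA\pmod 4\qquad\text{for every }A\in\GL_2(\mathbb{Z}/4).
\end{equation*}

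To verify this, write $M=I+2N$ with $N=\bigl(\begin{smallmatrix}0&1\\1&1\end{smallmatrix}\bigr)$, so that $AM-MA=2(AN-NA)$ and $AM+MA\equiv 2A+2(AN+NA)\pmod 4$. The $\mathrm{sign}(\bar A)=+1$ case then becomes $[\bar A,\bar N]=0$, i.e.\ $\bar A$ lies in the centraliser $\langle\bar N\rangle$ of the $3$-cycle $\bar N$, which is exactly the alternating subgroup. The $\mathrm{sign}(\bar A)=-1$ case becomes $\bar A(I+\bar N)\equiv\bar N\bar A\pmod 2$; using $\bar N^2=I+\bar N$ in $\mathbb{F}_2$ (the minimal polynomial of $\bar N$ is $X^2+X+1$) this rearranges to $\bar A\bar N^2\bar A^{-1}=\bar N$, forcing $\bar A$ to be a transposition. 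The two cases exhaust $\GL_2(\mathbb{F}_2)$ and align precisely with the sign, so the congruence holds universally. The main obstacle is pinpointing the correct $M$ so that $M-I\in 2\cdot\mathrm{End}(E[4])$ with reduction of order three; granted that, the sign dichotomy is forced by the conjugacy structure of $S_3$.
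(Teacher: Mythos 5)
Your argument is correct, and it reaches the same essential reduction as the paper --- namely that equivariance of $\phi$ amounts to the congruence $\chi(\sigma)A_\sigma M\equiv MA_\sigma \pmod 4$ with $M=\bigl(\begin{smallmatrix}1&2\\2&3\end{smallmatrix}\bigr)$ and $\chi$ the quadratic character of $\mathbb{Q}(\sqrt{\Delta_E})$ --- but you verify that congruence by a genuinely different route. The paper picks three explicit generators $v_1,v_2,v_3$ of $\GL_2(\mathbb{Z}/4\mathbb{Z})$, records which of them flip the sign of $\sqrt{\Delta_E}$, and checks $v_j\phi=\phi v_j$ on the basis $\{p,q\}$ by brute force; this is short but gives no indication of why the particular matrix $M$ works. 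Your decomposition $M=I+2N$ with $\bar N$ of order three in $\GL_2(\mathbb{F}_2)\cong S_3$ proves the congruence uniformly for every $A\in\GL_2(\mathbb{Z}/4\mathbb{Z})$: the sign-$(+1)$ case is exactly the statement that $\bar A$ centralises $\bar N$ (the alternating subgroup), and the sign-$(-1)$ case, via the minimal polynomial $X^2+X+1$ of $\bar N$, is exactly the statement that $\bar A$ inverts $\bar N$ (the transpositions); these two conditions partition $S_3$ according to sign, so the required congruence holds for all $A$. What your approach buys is an explanation of where $M$ comes from (any lift $I+2N$ with $\bar N$ of order three would do) and a proof that does not depend on a choice of generating set; what the paper's approach buys is brevity and no reliance on the identification of $\chi$ with the sign character, which you should state carefully (it follows from $\Delta_E$ being, up to rational squares, $\prod_{i<j}(\theta_i-\theta_j)^2$). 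Both proofs are complete.
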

This result can also be found in [BD] Section 7.
\begin{proof} Fix a basis $\{p,q\}$ for $E[4]$. For each $s \in G_\mathbb{Q}$, we identify $s$ with its image under
$\theta': G_\mathbb{Q} \to \GL(E[4]) \subset \GL_2(\mathbb{Z}/4\mathbb{Z})$. Take generators
$v_1,v_2,v_3$ for $\GL_2(\mathbb{Z}/4\mathbb{Z})$ where
$$v_1=\begin{pmatrix} 3&0\\0&1 \end{pmatrix},v_2=\begin{pmatrix}
0&1\\3&0\end{pmatrix}, v_3=\begin{pmatrix} 1&1\\0&1 \end{pmatrix}.$$
Then it suffices to check that $v_j\phi=\phi v_j,j=1,2,3$. Note that $v_1$ fixes $\sqrt{\Delta_E}$ and $v_2,v_3$ switch the sign of $\sqrt{\Delta_E}$.

Then a direct computation shows that
\begin{align*}
v_1\phi(p)&=\phi v_1(p)=3p'+2q', v_1\phi(q)=\phi v_1(q)=2p'+3q',\\
v_2\phi(p)&=\phi v_2(p)=2p'+q', v_2\phi(q)=\phi v_2(q)=p'+2q',\\
v_3\phi(p)&=\phi v_3(p)=p'+2q',v_3\phi(q)=\phi v_3(q)=3p'+q'.
\end{align*}
\end{proof}

\begin{lemma}
Let $t_1,\ldots,t_6$ be the cusps of $X_E(4)$ which are the images of $\pm\frac{1}{2},\pm\frac{i}{2},0,\infty$ respectively, under the isomorphism
$X(4) \to X_E(4)$. If we set
$$m_1=t_1+t_2,m_2=t_3+t_4,m_3=t_5+t_6,l_1=t_1t_2,l_2=t_3t_4,l_3=t_5t_6$$
and let $\theta_j,j=1,2,3$ be the roots of $x^3+ax+b=0$.
Then $m_j=-\frac{2}{3}\theta_j$ and $l_j=-\frac{1}{9}(2\theta^2_j+a)$
for each $j$.
\end{lemma}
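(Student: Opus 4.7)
The plan is to pull back the cusps of $X(4)$ through Fisher's isomorphism $X(4)\to X_E(4)$ and then match the resulting data to the $2$-torsion cubic of $E$.

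The cusps of $X(4)$ are the six zeros of $T=uv(16u^4-v^4)$, which factors over $\mathbb{Q}$ as $uv\cdot(2u-v)(2u+v)\cdot(4u^2+v^2)$; the three quadratic factors group the cusps into the Galois-stable pairs $\{0,\infty\}$, $\{\pm\tfrac{1}{2}\}$ and $\{\pm\tfrac{i}{2}\}$, so it is much more efficient to work pair by pair. The matrix $\bigl(\begin{smallmatrix} u&-T_v\\ v&T_u\end{smallmatrix}\bigr)$ for $X_E(4)\to X(4)$ sends an affine point $(t:1)$ of $X_E(4)$ to $(ut-T_v:vt+T_u)$, so substituting $s_1=ut-T_v,\ s_2=vt+T_u$ into the three quadratic factors
\[
s_1s_2,\qquad 4s_1^2-s_2^2,\qquad 4s_1^2+s_2^2
\]
produces three quadratics in $t$ whose roots are exactly $\{t_5,t_6\}$, $\{t_1,t_2\}$ and $\{t_3,t_4\}$ respectively. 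Reading off sums and products of roots, and inserting $T_u=v(80u^4-v^4)$, $T_v=u(16u^4-5v^4)$, gives closed-form rational (in fact polynomial after the obvious cancellations) expressions for each $m_j$ and $l_j$ in $u$ and $v$.

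To match these expressions with the claimed formulas I would argue via elementary symmetric functions. The roots $\theta_j$ of $x^3+ax+b$ satisfy $\sum\theta_j=0$, $\sum_{i<j}\theta_i\theta_j=a$ and $\theta_1\theta_2\theta_3=-b$, so it suffices to verify the three identities
\[
\sum_j m_j=0,\qquad \sum_{i<j}m_im_j=\tfrac{4a}{9},\qquad m_1m_2m_3=\tfrac{8b}{27}
\]
as polynomial identities in $u,v$ after substituting $a=-27c_4(u,v)$ and $b=-54c_6(u,v)$. These identify $\{m_j\}$ as an unordered set with $\{-\tfrac{2}{3}\theta_j\}$. The formula for $l_j$ is then essentially a corollary: since $-\tfrac{1}{9}(2\theta_j^2+a)$ depends on $j$ only through $\theta_j^2=\tfrac{9}{4}m_j^2$, the pairing with the cusp pair is unambiguous, and one need only check the polynomial identity $9l_j+\tfrac{9}{2}m_j^2+a=0$ in $u,v$ for each of the three pairs.

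The main obstacle is purely clerical rather than conceptual: after substitution the polynomial identities involved have degree as high as $24$ in $u$ and $v$, so although each check is completely mechanical it is bulky enough that it is natural to verify it in a computer algebra system such as MAGMA.
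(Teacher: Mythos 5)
Your plan is exactly the ``direct computation'' the paper silently invokes: pull the three Galois-stable cusp pairs back through the explicit matrix for $X(4)\to X_E(4)$, read off $m_j,l_j$ by Vieta from the three quadratics in $t$, and match against the elementary symmetric functions of the $\theta_j$. The organisation is sound, the factorisation $T=uv(2u-v)(2u+v)(4u^2+v^2)$ correctly groups the pairs $\{t_5,t_6\},\{t_1,t_2\},\{t_3,t_4\}$, your formulas for $T_u,T_v$ are right, and the remark that the $l_j$-matching is forced because $-\tfrac19(2\theta_j^2+a)$ depends only on $\theta_j^2=\tfrac94 m_j^2$ is a clean way to dispose of the labelling ambiguity.

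One concrete warning: if you carry this out you will find that the product identity you propose to check, $m_1m_2m_3=\tfrac{8b}{27}$, fails by a sign; the computation actually gives $m_1m_2m_3=-\tfrac{8b}{27}$, i.e.\ $m_j=+\tfrac23\theta_j$ rather than $-\tfrac23\theta_j$. (For instance at $u=v=1$ one has $a=-12987$, $b=-263466$, the roots are $\{123,-21,-102\}$, and the cusp-pair sums come out as $\{82,-14,-68\}=\{\tfrac23\theta_j\}$, not $\{-\tfrac23\theta_j\}$; the products $l_j$ match the stated formula either way since it is even in $\theta_j$.) This is a sign slip in the lemma as printed rather than in your method: the version $m_j=\tfrac23\theta_j$ is the one consistent with the equations $f_1=g_1=h_1=0$ of Theorem 1.1 (the two conventions differ by the harmless substitution $t\mapsto -t$). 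So when your MAGMA check of the cubic symmetric function comes back with the opposite sign, do not conclude that your setup of the fractional linear map is wrong; the other two symmetric-function identities, $\sum_j m_j=0$ and $\sum_{i<j}m_im_j=\tfrac{4a}{9}$, and the relation $9l_j+\tfrac92 m_j^2+a=0$ all hold exactly as you predict.
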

\begin{proof} This follows from a direct computation.
\end{proof}
\begin{flushleft}
{\bf Remark}.
Since we identify $X^3_E(4)$ with $X_E(4)$, so $t_1,\ldots,t_6$ are also the cusps of $X^3_E(4)$ and so the above lemma also holds for $X^3_E(4)$.
\end{flushleft}

We now illustrate the method to compute $X^r_E(8)$, $r=1,3,5,7$. For simplicity, assume that $x^3+ax+b$ is irreducible. It follows immediately from compatibility of the Weil pairing that $X^r_E(8)$ is a cover of $X^{\bar{r}}_E(4)$, where $\bar{r}=r$ mod $4$.
It can be shown that $X^r_E(n)$ is a twist of $X(n)$ (see, for example, [S]).
So $X(8)$ and $X^r_E(8)$ have the same ramification behavior under the forgetful morphism to the level four structure. Thus the forgetful morphism
$X^r_E(8) \to X^{\bar{r}}_E(4)$ is only ramified at the points above the cusps of $X^{\bar{r}}_E(4)$.

\begin{lemma}
For each $r \in (\mathbb{Z}/8\mathbb{Z})^*$, the function field of $X^r_E(8)$ over $\mathbb{Q}(E[2])$
is
$$\mathbb{Q}(E[2])\left(t,\sqrt{\alpha_{r,1}(t-t_1)(t-t_2)},\sqrt{\alpha_{r,2}(t-t_3)(t-t_4)},\sqrt{\alpha_{r,3}(t-t_5)(t-t_6)}\right)$$
for some appropriate $\alpha_{r,j} \in \mathbb{Q}(E[2]), j=1,2,3$. We call
these $\alpha_{r,j},j=1,2,3$ the {\bf scaling factors} of $X^r_E(8)$.
\end{lemma}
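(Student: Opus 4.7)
My plan is to combine the Kummer-theoretic description of $X(8) \to X(4)$ from Section 2 with the identification $X^{\bar{r}}_E(4) \cong X(4)$ recalled at the start of Section 3, and then descend to $\mathbb{Q}(E[2])$.

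First I would recall from Section 2 that over any field containing $\mu_8$, $K_8$ is obtained from $K_4$ by adjoining $\sqrt{u^2 - 1/4}$, $\sqrt{u^2 + 1/4}$ and $\sqrt{-u}$. The divisors of these three rational functions on $X(4) \cong \mathbb{P}^1_u$ are supported exactly on the six cusps, and partition them into three pairs $\{1/2, -1/2\}$, $\{i/2, -i/2\}$, $\{0, \infty\}$. Hence $X(8) \to X(4)$ is $(\mathbb{Z}/2\mathbb{Z})^3$-Galois, ramified only at the cusps, with its three intermediate quadratic subcovers ramified exactly over these three cusp pairs.

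Next I would transport this structure along the isomorphism $X(4) \to X^{\bar{r}}_E(4)$ from Section 3. Under this isomorphism the cusps map to $t_1, \ldots, t_6$ with the pairs $\{t_1, t_2\}$, $\{t_3, t_4\}$, $\{t_5, t_6\}$. Since $X^r_E(8)$ is a twist of $X(8)$, the cover $X^r_E(8) \to X^{\bar{r}}_E(4)$ shares the same $\bar{\mathbb{Q}}$-geometry, so it is $(\mathbb{Z}/2\mathbb{Z})^3$-Galois, ramified only at the six cusps, and possesses three intermediate quadratic subcovers ramified precisely over these three pairs. By Lemma 3.2, the symmetric functions $m_j$ and $l_j$ of each pair $\{t_{2j-1}, t_{2j}\}$ lie in $\mathbb{Q}(E[2])$, so each quadratic $(t - t_{2j-1})(t - t_{2j}) = t^2 - m_j t + l_j$ has coefficients in $\mathbb{Q}(E[2])$. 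Kummer theory then forces each intermediate quadratic extension of $\mathbb{Q}(E[2])(t)$ to be of the form $\mathbb{Q}(E[2])\bigl(t, \sqrt{\alpha_{r,j}(t - t_{2j-1})(t - t_{2j})}\bigr)$ for some $\alpha_{r,j} \in \mathbb{Q}(E[2])^*$, determined modulo squares by the twist, and the compositum gives the claimed description of the function field of $X^r_E(8)$ over $\mathbb{Q}(E[2])$.

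The main obstacle is justifying that the partition of cusps into three specific pairs is an intrinsic invariant of the cover, and in particular is independent of $r$. This rests on the fact that twisting changes only the descent data and not the underlying $\bar{\mathbb{Q}}$-cover, so the three intermediate $(\mathbb{Z}/2\mathbb{Z})$-subcovers (and hence their ramification loci) are the same for every twist; only the scaling classes $\alpha_{r,j} \in \mathbb{Q}(E[2])^*/(\mathbb{Q}(E[2])^*)^2$ can vary with $r$. The concrete determination of these scaling factors for each $r$ is left to the cocycle calculations of Section 5.
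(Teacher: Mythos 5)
Your proposal is correct and follows essentially the same route as the paper: identify the function field of $X(8)$ over $X(4)$ as in Section 2, transfer it to $X^r_E(8) \to X^{\bar r}_E(4)$ using that a twist has the same geometry over $\bar{\mathbb{Q}}$, invoke Lemma 3.2 for the rationality of the quadratics $(t-t_{2j-1})(t-t_{2j})$ over $\mathbb{Q}(E[2])$, and descend via Kummer theory. You simply make explicit the ramification and descent details that the paper's proof leaves implicit.
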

\begin{proof}
As is described in Section 2, if we fix an affine coordinate $u$ of $X(4)$, then the function field of $X(8)$ over $\mathbb{Q}(\zeta)$ is given by
$$\mathbb{Q}(\zeta)(u,\sqrt{u^2-1/4},\sqrt{u^2+1/4},\sqrt{-u})$$
where $\zeta$ is a fixed primitive $8$th root of unity.

Fix an affine coordinate $t$ of $X_E(4)$ as above. Since $t_1,\ldots,t_6$ are the images of $\pm \frac{1}{2},\pm\frac{i}{2},0,\infty$ respectively, the function field of $X^r_E(8)$ over $\mathbb{C}$ has the form
$$\mathbb{C}(t,\sqrt{(t-t_1)(t-t_2)},\sqrt{(t-t_3)(t-t_4)},\sqrt{(t-t_5)(t-t_6)}).$$
By Lemma 3.2, the rational functions
$(t-t_1)(t-t_2),(t-t_3)(t-t_4),(t-t_5)(t-t_6)$ are defined over $\mathbb{Q}(E[2])$ and are conjugate to each other.

As $X^r_E(8)$ has a model over $\mathbb{Q}$ and so it has a model over $\mathbb{Q}(E[2])$. Then the function field of $X^r_E(8)$ over
$\mathbb{Q}(E[2])$ is
$$\mathbb{Q}(E[2])\left(t,\sqrt{\alpha_{r,1}(t-t_1)(t-t_2)},\sqrt{\alpha_{r,2}(t-t_3)(t-t_4)},\sqrt{\alpha_{r,3}(t-t_5)(t-t_6)}\right)$$
for some appropriate $\alpha_{r,1},\alpha_{r,2},\alpha_{r,3} \in \mathbb{Q}(E[2])$ which are conjugate to each other.
\end{proof}
\begin{corollary}
For each $r \in (\mathbb{Z}/8\mathbb{Z})^*$, the equation of $X^r_E(8) \subset A^4_{t,a_0,a_1,a_2}(\mathbb{Q})$ is determined by the scaling factors $\alpha_{r,j},j=1,2,3$. In particular, the equation of $X^r_E(8)$ over
$\mathbb{Q}$ is obtained
by comparing the coefficients of $1,\theta_j,\theta^2_j,j=1,2,3$ in the equations
$$\alpha_{r,j} (t-t_{2j-1})(t-t_{2j})=(a_0+a_1\theta_j+a_2\theta^2_j)^2, j=1,2,3.$$
\end{corollary}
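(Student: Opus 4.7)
The plan is to deduce the corollary from Lemma 3.3 by explicit Galois descent. Write $y_j=\sqrt{\alpha_{r,j}(t-t_{2j-1})(t-t_{2j})}$, so that Lemma 3.3 identifies the function field of $X^r_E(8)$ over $\mathbb{Q}(E[2])$ as $\mathbb{Q}(E[2])(t,y_1,y_2,y_3)$. Using Lemma 3.2 together with the conjugacy of the $\alpha_{r,j}$ asserted in Lemma 3.3, the natural action of $\Gal(\mathbb{Q}(E[2])/\mathbb{Q})\cong S_3$ permutes the radicands, and by choosing the square roots coherently the $y_j$ may be assumed to be permuted (up to sign) in the same way.

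Because $x^3+ax+b$ is irreducible, $\{1,\theta,\theta^2\}$ is a $\mathbb{Q}$-basis of $\mathbb{Q}(E[2])=\mathbb{Q}(\theta)$, and the Vandermonde matrix $(\theta_j^{\,i})_{i,j=0,1,2}$ is invertible. Hence any element $Y\in\mathbb{Q}(E[2])(t)$ has a unique presentation $Y=a_0+a_1\theta+a_2\theta^2$ with $a_0,a_1,a_2\in\mathbb{Q}(t)$, its Galois conjugates are $a_0+a_1\theta_j+a_2\theta_j^2$, and conversely every $(a_0,a_1,a_2)\in\mathbb{Q}(t)^3$ yields such a Galois-equivariant triple. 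Applying this to $y_1$ and squaring yields the three conjugate equations
\[
\alpha_{r,j}(t-t_{2j-1})(t-t_{2j}) \;=\; (a_0+a_1\theta_j+a_2\theta_j^2)^2, \quad j=1,2,3,
\]
which is exactly the system in the statement.

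To rewrite each identity over $\mathbb{Q}$, I would expand the right hand side as a polynomial of degree at most four in $\theta_j$ and reduce modulo $\theta_j^3+a\theta_j+b=0$; on the left hand side $(t-t_{2j-1})(t-t_{2j})=t^2-m_jt+l_j$ is already written in the basis $\{1,\theta_j,\theta_j^2\}$ by Lemma 3.2, and the same will be true for $\alpha_{r,j}$ once it is computed. Comparing the coefficients of $1$, $\theta_j$ and $\theta_j^2$ then gives three polynomial relations in $\mathbb{Q}[t,a_0,a_1,a_2]$; because the three $j$-versions are $S_3$-conjugate, they collapse to the same triple of equations, producing the desired affine model $X^r_E(8)\subset\mathbb{A}^4_{t,a_0,a_1,a_2}$ cut out by exactly three equations.

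The main obstacle is really upstream of the corollary: determining the scaling factors $\alpha_{r,j}\in\mathbb{Q}(E[2])$ for each $r\in(\mathbb{Z}/8\mathbb{Z})^*$, which is the content of the cocycle computations of Section 5 together with their applications in Sections 4 and 6. Once the $\alpha_{r,j}$ are known, birationality of the three-equation variety with $X^r_E(8)$ can be checked after base change to $\mathbb{Q}(E[2])$: the system then factors into the three quadratic relations $y_j^2=\alpha_{r,j}(t-t_{2j-1})(t-t_{2j})$ together with $y_j=a_0+a_1\theta_j+a_2\theta_j^2$, and the resulting $(\mathbb{Z}/2\mathbb{Z})^3$-cover of $X^{\bar r}_E(4)$ reproduces the function field supplied by Lemma 3.3.
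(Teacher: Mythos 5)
Your proposal is correct and follows essentially the same route as the paper: Galois descent from $\mathbb{Q}(E[2])$ to $\mathbb{Q}$ by writing the conjugate square roots $w_j=\sqrt{\alpha_{r,j}(t-t_{2j-1})(t-t_{2j})}$ in the form $a_0+a_1\theta_j+a_2\theta_j^2$ and comparing coefficients of $1,\theta_j,\theta_j^2$, which are the $\Gal(\mathbb{Q}(E[2])/\mathbb{Q})$-invariants. The extra details you supply (Vandermonde invertibility, reduction modulo the minimal polynomial, and the base-change check that the three-equation system recovers the function field of Lemma 3.3) are elaborations of the paper's argument rather than a different method.
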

\begin{proof}
The extension of function fields $(X^r_E(8)/\mathbb{Q}(E[2]))/(X^r_E(8)/\mathbb{Q})$ is Galois. Therefore to find a model of $X^r_E(8)$ over $\mathbb{Q}$, it suffices to find enough generating elements in the function field of $X^r_E(8)$ over $\mathbb{Q}(E[2])$ which are fixed by $\Gal(\mathbb{Q}(E[2])/\mathbb{Q})$. Explicitly,
we will write $w_j:=\sqrt{\alpha_{r,j}(t-t_{2j-1})(t-t_{2j})}$  and so
$w^2_j=\alpha_{r,j}(t-t_{2j-1})(t-t_{2j})$ for each $j=1,2,3$.

By Lemma 3.2, $w_j=a_0+a_1\theta_j+a_2\theta^2_j$ for some
$a_0,a_1,a_2 \in \mathbb{Q}$ for each $j=1,2,3$. Therefore we obtain equations
$$\alpha_{r,j} (t-t_{2j-1})(t-t_{2j})=(a_0+a_1\theta_j+a_2\theta^2_j)^2, j=1,2,3.$$

To find a model of $X^r_E(8)$ over $\mathbb{Q}$, it suffices to compare the coefficients of $1,\theta_j,\theta^2_j$,
$j=1,2,3$ on both sides of the equations above because these are invariant under the action of $\Gal(\mathbb{Q}(E[2])/\mathbb{Q}$).
\end{proof}
\begin{flushleft}
{\bf Remark}.
In fact it suffices to compare the coefficients of $1,\theta_j,\theta^2_j$ in one of the equations
$$\alpha_{r,j} (t-t_{2j-1})(t-t_{2j})=(a_0+a_1\theta_j+a_2\theta^2_j)^2,j=1,2,3$$
because they are conjugate to each other.
\end{flushleft}

\begin{flushleft}
{\bf Remark}. We are free to multiply $\alpha_{r,j}$ by a non-zero squared factor of the form $(u_0+u_1\theta_j+u_2\theta^2_j)^2$
because this leads to a change of coordinate in $a_0,a_1,a_2$.
\end{flushleft}

We can extend the above results to the case when $x^3+ax+b$ is reducible.
For example, if $x^3+ax+b$ splits completely over $\mathbb{Q}$, then
the rational functions $(t-t_{2j-1})(t-t_{2j}),j=1,2,3$ are defined over $\mathbb{Q}$. So
$X^r_E(8) \subset A^4_{t,w_1,w_2,w_3}(\mathbb{Q})$ has equations
$$w^2_j=\alpha_{r,j}(t-t_{2j-1})(t-t_{2j}), j=1,2,3$$
for some appropriate $\alpha_{r,j} \in \mathbb{Q},j=1,2,3$.
This is isomorphic to the ones stated in Theorem 1.1-1.4 because
there is a bijection between $\{a_0,a_1,a_2\}$ and $\{a_0+a_1\theta_j
+a_2\theta^2_j:j=1,2,3\}$ over $\mathbb{Q}$. The case when
$x^3+ax+b$ has exactly one rational root is similar.

\section{The Modular Curves $X_E(8)$ And $X^5_E(8)$}
$\quad$By Corollary 3.4, to find equations of $X^r_E(8)$ over $\mathbb{Q}$, it suffices
to compute the scaling factors $\alpha_{r,j},j=1,2,3$ as introduced in
 Lemma 3.3. We prove
Theorem 1.1 and 1.2 in this section.
\begin{theorem}
We can pick $\alpha_{1,j}$ to be $1$ for each $j=1,2,3$. In particular, we obtain the equation of $X_E(8)$ as stated in Theorem 1.1, together with the forgetful map $X_E(8) \to X_E(4)$ given by
$(t,a_0,a_1,a_2) \mapsto t$.
\end{theorem}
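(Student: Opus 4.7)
The plan is to reduce Theorem~4.1 to a direct computation using Corollary~3.4, by first showing that we may take all three scaling factors $\alpha_{1,j}=1$. The key tool is the canonical $\mathbb{Q}$-rational point $(E,\mathrm{id}_{E[8]})$ on $X_E(8)$, which I would use to force $\alpha_{1,j}$ to be a square in $\mathbb{Q}(E[2])^*$.

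First I would observe that this canonical point maps under the forgetful map $X_E(8)\to X_E(4)$ to $(E,\mathrm{id}_{E[4]})$, corresponding to $t=\infty$ in the identification of Section~3. By Lemma~3.2 the six cusps $t_1,\ldots,t_6$ are finite (their pairwise sums and products are expressions in the roots $\theta_j$ and in $a$), so $t=\infty$ is a smooth non-cuspidal point, and the forgetful map is étale in a neighbourhood of it. Changing to the local chart with uniformiser $s=1/t$ and rescaled coordinates $b_i = a_i/t$, the canonical rational point acquires finite coordinates $(0,b_0^\ast,b_1^\ast,b_2^\ast)\in\mathbb{Q}^4$.

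In this chart the relation of Corollary~3.4 becomes
$$(b_0 + b_1\theta_j + b_2\theta_j^2)^2 \;=\; \alpha_{1,j}(1 - s\, t_{2j-1})(1 - s\, t_{2j}),$$
which at $s=0$ exhibits $\alpha_{1,j}$ as the square of $b_0^\ast + b_1^\ast\theta_j + b_2^\ast\theta_j^2 \in \mathbb{Q}(\theta_j)\subset \mathbb{Q}(E[2])$. By the Remark after Corollary~3.4 we may then rescale so that $\alpha_{1,j}=1$; since the three rescalings are Galois conjugate (they are governed by the Galois action permuting $\theta_1,\theta_2,\theta_3$), the change of coordinates in $(a_0,a_1,a_2)$ is defined over $\mathbb{Q}$.

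With $\alpha_{1,j}=1$ the rest is a direct calculation: expand $(a_0+a_1\theta_j+a_2\theta_j^2)^2$ modulo $\theta_j^3 + a\theta_j + b$, rewrite $(t-t_{2j-1})(t-t_{2j}) = t^2 - m_j t + l_j$ using the values of $m_j,l_j$ from Lemma~3.2, and match coefficients of $1,\theta_j,\theta_j^2$ to obtain the three defining polynomials $h_1,g_1,f_1$; the forgetful map $X_E(8)\to X_E(4)$ is the projection $(t,a_0,a_1,a_2)\mapsto t$ by construction. The main obstacle is the asymptotic step: one must verify that the canonical rational point indeed acquires finite rational coordinates in the rescaled chart and that the specialisation at $s=0$ really yields a squareness statement for $\alpha_{1,j}$ independent of any further scaling ambiguity. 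Once this point is pinned down, the remainder of the argument is bookkeeping.
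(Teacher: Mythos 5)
Your argument is essentially the paper's own proof: both use the tautological rational point $(E,[1])$ on $X_E(8)$ lying over $t=\infty$ on $X_E(4)$ to evaluate the relation $\alpha_{1,j}(t-t_{2j-1})(t-t_{2j})=(a_0+a_1\theta_j+a_2\theta_j^2)^2$ at infinity (the paper homogenizes and normalizes the point to $(1{:}1{:}0{:}0{:}0)$, whereas you pass to the chart $s=1/t$, $b_i=a_i/t$ and then invoke the rescaling remark --- the same computation in different coordinates). The only point worth pinning down, which the paper also glosses over, is that the leading coefficient $b_0^\ast+b_1^\ast\theta_j+b_2^\ast\theta_j^2$ is nonzero so that the rescaling of the Remark after Corollary~3.4 applies; otherwise the proposal is correct.
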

\begin{proof}
There is always a tautological rational point on the curve $X_E(n)$ for any $n$ which corresponds to the pair $(E,[1])$.
The point on $X_E(4)$ corresponding to $(E,[1])$ is given by the point of infinity under the isomorphism we described in Section 3. Since we construct $X_E(8)$ as a cover of $X_E(4)$, there is a point on $X_E(8)$ above $t=\infty$ which corresponds to $(E,[1])$. By a change of coordinate of $a_0,a_1,a_2$, we may take this point to be $t=\infty,a_0=1,a_1=0,a_2=0$.

By corollary 3.4, the equation of $X_E(8)$ over $\mathbb{Q}$ is determined by comparing the coefficients of $1,\theta_j,\theta^2_j$ in the equations
$$\alpha_{1,j} (t-t_{2j-1})(t-t_{2j})=(a_0+a_1\theta_j+a_2\theta^2_j)^2, j=1,2,3$$
Taking homogenous coordinates in the above equations we have
$$\alpha_{1,j} (t-t_{2j-1}s)(t-t_{2j}s)=(a_0+a_1\theta_j+a_2\theta^2_j)^2, j=1,2,3$$
and so the point $t=\infty,a_0=1,a_1=0,a_2=0$ is now $(t:a_0:a_1:a_2:s)=(1:1:0:0:0)$.
Substituting this point into the equations, we conclude that we can
take $\alpha_{1,j},j=1,2,3$ to be $1$.
\end{proof}
By compatibility of the Weil pairing, $X^5_E(8)$ is also a cover of $X_E(4)$. The proof of Theorem 1.2 is based on the following observations.
\begin{lemma} Let $E$ be an elliptic curve and fix any basis $\{P,Q\}$ for
$E[8]$. Then the map
$$\phi:E[8] \to E[8], \phi(P)=5P,\phi(Q)=Q$$
is $G_R$-equivariant where $R=\mathbb{Q}(E[2])$.
\end{lemma}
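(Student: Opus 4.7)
The plan is to reduce the statement to a direct matrix computation in $\GL_2(\mathbb{Z}/8\mathbb{Z})$. First I would fix the given basis $\{P,Q\}$ and let $\theta:G_R \to \GL_2(\mathbb{Z}/8\mathbb{Z})$ be the resulting mod-$8$ representation. Since $R=\mathbb{Q}(E[2])$, every $\sigma \in G_R$ acts trivially on $E[2] \cong E[8]/2E[8]$. Consequently the image of $\theta$ lies in the kernel of the reduction $\GL_2(\mathbb{Z}/8\mathbb{Z}) \to \GL_2(\mathbb{Z}/2\mathbb{Z})$, so for each $\sigma$ we can write $\theta(\sigma)=I+2A_\sigma$ for some $A_\sigma \in M_2(\mathbb{Z}/4\mathbb{Z})$.

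In the same basis $\phi$ is represented by $M=\begin{pmatrix} 5 & 0 \\ 0 & 1 \end{pmatrix}$, and the claim that $\phi$ is $G_R$-equivariant is the assertion $M\theta(\sigma)=\theta(\sigma)M$ in $\GL_2(\mathbb{Z}/8\mathbb{Z})$ for every $\sigma$. Expanding $\theta(\sigma)=I+2A_\sigma$, this reduces to $2[M,A_\sigma] \equiv 0 \pmod{8}$, equivalently $[M,A_\sigma] \equiv 0 \pmod{4}$. Writing $A_\sigma=\begin{pmatrix} a & b \\ c & d \end{pmatrix}$ and noting that $M-I$ has a single nonzero entry $4$ in the top-left, a one-line computation gives $[M,A_\sigma]=\begin{pmatrix} 0 & 4b \\ -4c & 0 \end{pmatrix}$, which is manifestly divisible by $4$. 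This disposes of every $\sigma \in G_R$ simultaneously.

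There is essentially no real obstacle here: the content of the lemma is just the observation that conjugating $M$ by an element of $I+2M_2(\mathbb{Z}/4\mathbb{Z})$ leaves $M$ fixed in $\GL_2(\mathbb{Z}/8\mathbb{Z})$, because $5-1=4$ while the off-diagonal perturbations already carry a factor of $2$. The lemma then holds for \emph{any} basis $\{P,Q\}$ of $E[8]$, since the argument uses nothing about $P$ and $Q$ beyond the fact that they form a $\mathbb{Z}/8\mathbb{Z}$-basis.
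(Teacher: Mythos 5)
Your proof is correct and takes essentially the same approach as the paper's: the paper writes $s(P)=A_1P+A_2Q$, $s(Q)=A_3P+A_4Q$, uses $R$-rationality of the $2$-torsion to conclude $A_2,A_3$ are even, and checks equivariance directly, which is exactly your commutator computation with $\theta(\sigma)=I+2A_\sigma$ in coordinate-free form. The underlying point in both cases is that $M-I=4E_{11}$ kills the factor of $2$ in the off-diagonal entries modulo $8$.
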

\begin{proof}
The non-trivial $2$-torsion points $4P,4Q,4P+4Q$ are $R$-rational. Let $s \in G_R$ and write
$$s(P)=A_1P+A_2Q,g(Q)=A_3P+A_4Q.$$
Then $s(4P)=4P$ and $s(4Q)=4Q$. So $A_2,A_3$ are both even. Thus,
$$\phi(s(P))=\phi(A_1P+A_2Q)=5A_1P+A_2Q=5A_1P+5A_2Q=s(\phi(P))$$
and
$$\phi(s(Q))=\phi(A_3P+A_4Q)=5A_3P+A_4Q=A_3P+A_4Q=s(\phi(Q)).$$
\end{proof}
\begin{lemma}
If the modular curves $X^5_E(8)$ and $X_E(8)$ are isomorphic over $K$ as covers of $X_E(4)$, then $\Delta_E$ is a square in $K$.
\end{lemma}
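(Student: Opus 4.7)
The plan is to translate the hypothesis into the existence of a distinguished $G_K$-equivariant self-isomorphism of $E[8]$, and then to read off a condition on the $G_K$-action on $E[2]$ via a short centraliser computation in $M_2(\mathbb{F}_2)\cong\operatorname{End}(E[2])$.

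First I would exploit the tautological point. The pair $(E,\operatorname{id}\colon E[8]\to E[8])$ defines a $\mathbb{Q}$-rational point on $X_E(8)$, and the forgetful map carries it to the tautological point on $X_E(4)$. Applying the assumed $K$-isomorphism of covers $f\colon X_E(8)\xrightarrow{\sim}X^5_E(8)$ to this point produces a $K$-rational point on $X^5_E(8)$ lying above the tautological point on $X_E(4)$. By the moduli interpretation this point corresponds to a pair $(E,\psi)$, where $\psi\colon E[8]\to E[8]$ is a $G_K$-equivariant automorphism which scales the Weil pairing by $5$ and which induces the identity on $E[4]$ modulo the automorphism group $\{\pm1\}$ of $E$.

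Next, fix a basis $\{P,Q\}$ of $E[8]$ and, after possibly replacing $\psi$ by $-\psi$, write $\psi=I+4M$ for a unique $M\in M_2(\mathbb{F}_2)$. The identity $\det(I+4M)\equiv 1+4\operatorname{tr}(M)\pmod{8}$ turns the Weil-pairing condition $\det\psi=5$ into $\operatorname{tr}(M)=1$, while the equivariance of $\psi$, reduced modulo $2$, reads $\bar\rho(s)M=M\bar\rho(s)$ for all $s\in G_K$, where $\bar\rho\colon G_K\to\GL(E[2])\cong\GL_2(\mathbb{F}_2)\cong S_3$ is the mod-$2$ Galois representation. Hence the hypothesis forces the existence of a trace-$1$ matrix in the centraliser of $\bar\rho(G_K)$ inside $M_2(\mathbb{F}_2)$.

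The main step, which I expect to be the one with real content although it is short, is the resulting centraliser computation. If $\bar\rho(G_K)$ contains a transposition $T\in S_3$, then $T$ is non-scalar, so its centraliser in $M_2(\mathbb{F}_2)$ is the two-dimensional subalgebra $\mathbb{F}_2[T]=\{0,I,T,T+I\}$; all four elements have trace $0$, a contradiction. If instead $\bar\rho(G_K)\subset A_3$, the centraliser contains an element of order $3$, which has trace $1$, so $M$ exists. Hence the hypothesis forces $\bar\rho(G_K)\subset A_3$; since $\mathbb{Q}(\sqrt{\Delta_E})$ is the fixed field in $\mathbb{Q}(E[2])$ of the preimage of $A_3$, this is equivalent to $\sqrt{\Delta_E}\in K$.
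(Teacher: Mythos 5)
Your proof is correct, and its opening move --- extracting from the hypothesis a $G_K$-equivariant automorphism $\psi$ of $E[8]$ with $\det\psi=5$ that induces $\pm\operatorname{id}$ on $E[4]$ --- is exactly where the paper's proof starts (the paper asserts this directly rather than routing it through the tautological point, but the content is the same). Where you genuinely diverge is in the second half. The paper enumerates the four candidate matrices $T_1,\dots,T_4$ for $\psi$ in $\PGL_2(\mathbb{Z}/8\mathbb{Z})$, conjugates $T_1,T_2$ into $T_3$, and then for $T_3$ and $T_4$ separately works out by hand the parity constraints that $s\psi=\psi s$ imposes on the entries $A_1,\dots,A_4$ of the matrix of $s$, checking case by case that $s$ permutes $\{4P,4Q,4P+4Q\}$ evenly. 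Your normal form $\psi=\pm(I+4M)$ compresses all of this: the paper's $T_i$ are precisely the trace-one choices of $M\in M_2(\mathbb{F}_2)$ up to the scalar $5$, the determinant condition becomes $\operatorname{tr}M=1$, the equivariance becomes the single statement that $M$ centralises $\bar\rho(G_K)$, and the observation that the centraliser of a transposition is $\mathbb{F}_2[T]$, which is entirely trace-zero, replaces the paper's case analysis in one stroke while making visible that the conclusion is exactly $\bar\rho(G_K)\subseteq A_3$. One small point worth recording explicitly: the moduli interpretation a priori gives only projective equivariance $\rho(s)\psi\rho(s)^{-1}=\pm\psi$, but once you normalise $\psi\equiv I\pmod 4$ the left-hand side is again $\equiv I\pmod 4$, which forces the sign $+$; this justifies treating $\psi$ as genuinely equivariant, a step both your write-up and the paper pass over silently.
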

\begin{proof}
Suppose $X^5_E(8) \cong X_E(8)$ as covers of $X_E(4)$ then there exists a $G_K$-equivariant isomorphism
$\phi: E[8] \to E[8]$ such that $\det \phi=5$ and $\phi$ acts trivially on $E[4]$. If we fix a basis $\{P,Q\}$ for $E[8]$ then we can view $\phi$ as
a $2 \times 2$ matrix in terms of its action on $\{P,Q\}$. We only need to consider $\phi$ in $\PGL_2(\mathbb{Z}/8\mathbb{Z})$ because multiplications by $3,5,7$
are automorphisms on $E[8]$ which preserve the Weil pairing. So we have the following possible matrices to consider
$$T_1=\begin{pmatrix} 1&0\\4&5 \end{pmatrix}, T_2=\begin{pmatrix} 1&4\\0&5
\end{pmatrix}, T_3=\begin{pmatrix} 1&0\\0&5 \end{pmatrix},
T_4=\begin{pmatrix} 1&4\\4&5 \end{pmatrix}.$$

We see $T_1=\begin{pmatrix} 1&0\\1&1 \end{pmatrix}^{-1} T_3
\begin{pmatrix} 1&0\\1&1 \end{pmatrix}$,
$T_2=\begin{pmatrix} 1&1\\0&1\end{pmatrix}^{-1} T_3 \begin{pmatrix}
1&1\\0&1\end{pmatrix}$ and so it suffices to consider $T_3$ and $T_4$.

Let $s \in G_K$ and suppose the action of $s$ on $E[8]$ is given by $s(P)=A_1P+A_2Q,s(Q)=A_3P+A_4Q$. If $\phi$ is given by the matrix $T_3$,
then using $s\phi=\phi s$ we conclude $A_2,A_3$ are even. So $A_1,A_4$ are odd because the action of $s$ is invertible.
This implies $s$ fixes $E[2]$ and so $E[2]$ is $G_K$-invariant. In particular $\Delta_E$ is a square
in $K$.

If $\phi$ is given by $T_4$, then a direct computation using $s\phi=\phi s$ shows that $A_2$ and $A_3$ have the same parity
and $A_1+A_2 \equiv A_1+A_3 \equiv A_4$ mod $2$. Suppose $A_2$ and $A_3$ are both even then we have exactly the same situation as
above and so $\Delta_E$ is a square in $K$. Assume $A_2$ and $A_3$ are both odd. If $A_1$ is odd then $A_4$ is even
and we have $s(4P)=4P+4Q,s(4Q)=4P$. So $s(4P+4Q)=4Q$, in which case $\Delta_E$ is a square. If $A_1$ is even then $A_4$ is odd
and we have $s(4P)=4Q,s(4Q)=4P+4Q$. So $s(4P+4Q)=4P$, in which case $\Delta_E$ is again a square.

\end{proof}
\begin{theorem} We can pick $\alpha_{5,j}$ to be $D=-4a^3-27b^2$ for each $j=1,2,3$. In particular, we obtain the equation of $X^5_E(8)$ as stated in Theorem 1.2, together with the forgetful map $X^5_E(8) \to X_E(4)$ given by
$(t,a_0,a_1,a_2) \mapsto t$.
\end{theorem}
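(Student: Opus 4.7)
The plan is to determine the scaling factors $\alpha_{5,j}$ modulo squares by combining the two preparatory lemmas of this section. First, set $R=\mathbb{Q}(E[2])$. Lemma 4.2 supplies a $G_R$-equivariant isomorphism $\phi:E[8]\to E[8]$ with $\det\phi=5$ which acts trivially on $E[4]$ (since $2\cdot 5P=2P$ modulo $8$-torsion). Composition with $\phi$ gives the isomorphism $X_E(8)\cong X^5_E(8)$ over $R$ as covers of $X_E(4)$. Combined with Theorem 4.1 (where $\alpha_{1,j}=1$), this shows that the two function fields agree over $R(t)$, and therefore each $\alpha_{5,j}$ is a square in $R$.

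Next I would argue that a $\mathbb{Q}$-model of $X^5_E(8)$ over $X_E(4)$ differs from $X_E(8)$ by a single quadratic twist, so up to the freedom noted in the remark after Corollary 3.4 I may take a common scalar $\alpha_{5,j}=d\in\mathbb{Q}^*$ for all three $j$. By the preceding step $\sqrt d\in R$. Assuming $x^3+ax+b$ is irreducible with Galois group $S_3$, the unique quadratic subfield of $R$ is $\mathbb{Q}(\sqrt D)$, so $d$ is congruent to $1$ or $D$ modulo $\mathbb{Q}^{*2}$. Lemma 4.3 with $K=\mathbb{Q}$ rules out $d\equiv 1$ whenever $\Delta_E=16D$ is not a rational square: otherwise we would have $X^5_E(8)\cong X_E(8)$ over $\mathbb{Q}$, contradicting Lemma 4.3. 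Hence $d\equiv D\pmod{\mathbb{Q}^{*2}}$, and we normalize to $\alpha_{5,j}=D$. The cases where $x^3+ax+b$ has Galois group $A_3$ or is reducible are degenerate and handled as in the final paragraph of Section 3.

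The equations of $X^5_E(8)\subset\mathbb{A}^4(\mathbb{Q})$ then follow from Corollary 3.4 by expanding
$$D(t-t_{2j-1})(t-t_{2j})=(a_0+a_1\theta_j+a_2\theta_j^2)^2,$$
substituting $t_{2j-1}+t_{2j}=-\tfrac{2}{3}\theta_j$ and $t_{2j-1}t_{2j}=-\tfrac{1}{9}(2\theta_j^2+a)$ from Lemma 3.2, reducing $\theta_j^3=-a\theta_j-b$, and reading off the coefficients of $1,\theta_j,\theta_j^2$ to obtain $f_5,g_5,h_5$. The forgetful map $(t,a_0,a_1,a_2)\mapsto t$ is automatic from the construction. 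The main obstacle in this plan is the second step: justifying that the $\mathbb{Q}$-models of $X^5_E(8)$ as covers of $X_E(4)$ are genuinely parameterized by a single class in $\mathbb{Q}^*/\mathbb{Q}^{*2}$. In effect this requires understanding the Galois cohomology of $\mathrm{Aut}(X(8)/X(4))\cong(\mathbb{Z}/2\mathbb{Z})^3$, which Lemmas 4.2 and 4.3 together effectively resolve by identifying the ``triviality locus'' of the twist with the field where $\Delta_E$ becomes a square.
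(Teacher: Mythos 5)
Your proposal is correct and follows essentially the same route as the paper: Lemma 4.2 forces the $\alpha_{5,j}$ to be squares in $\mathbb{Q}(E[2])$, the unique quadratic subfield $\mathbb{Q}(\sqrt{D})$ reduces the choice to $1$ or $D$ modulo the allowed rescalings, Lemma 4.3 eliminates the trivial option, and the equations follow by expanding $\alpha_{5,j}(t-t_{2j-1})(t-t_{2j})=(a_0+a_1\theta_j+a_2\theta_j^2)^2$ via Lemma 3.2. The only cosmetic difference is that the paper phrases the first step as the existence of a $\mathbb{Q}(E[2])$-rational point on $X^5_E(8)$ above $t=\infty$ rather than as an isomorphism of function fields over $\mathbb{Q}(E[2])$.
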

\begin{proof}
By the Lemma 4.2, there is a
$\mathbb{Q}(E[2])$-rational point on $X^5_E(8)$ above $t=\infty$ which corresponds to
$(E,\phi)$ where $\phi$ is the same map as in Lemma 4.2.
Therefore $\alpha_{5,j},j=1,2,3$
are squares in $\mathbb{Q}(E[2])$. But there is a unique quadratic subfield
inside $\mathbb{Q}(E[2])$ which is $\mathbb{Q}(\sqrt{D})$ where
$D=-4a^3-27b^2$.

By the last remark of Section 3, $\alpha_{5,j}$ can be multiplied by any non-zero squared factor of the form $(u_0+u_1\theta_j+u_2\theta^2_j)^2$.
This shows that we may pick $\alpha_{5,j},j=1,2,3$ to be $1$ or $D$. But by Lemma 4.3, if $\alpha_{5,j}=1,j=1,2,3$ then $D$ is a square in $\mathbb{Q}$ and so we should pick $\alpha_{5,j}=D$ for each $j$.  A direct
computation gives the equation of $X^5_E(8)$ as in Theorem 1.2.
\end{proof}

\section{Cocycles}
$\quad$The proofs of Theorem 1.1 and Theorem 1.2 are based on the fact there is always a rational point on the curve $X_E(8)$. However this is not always true for $X^3_E(8)$ or $X^7_E(8)$, for any elliptic curve $E$.
We will prove Theorem 1.3 and 1.4 in the next section. By Corollary 3.4, it suffices to compute $\alpha_{3,j}$ and $\alpha_{7,j},j=1,2,3$.

It is shown in [S] that $X^r_E(n)$ are twists of $X(n)$. In particular,
$X^r_E(8)$ are twists of $X_E(8)$ for each $r \in (\mathbb{Z}/8\mathbb{Z})^*$.
By Theorem 2.2 in [AEC], for each curve $C/\mathbb{Q}$, there is a bijection
between the twists of $C/\mathbb{Q}$ and $H^1(G_\mathbb{Q},\text{Isom}(C))$
where $\text{Isom}(C)$ is the isomorphic group of $C$.
In this section, we will describe the relation between the scaling factors $\alpha_{r,j},j=1,2,3$ introduced in Lemma 3.3 and
the element which corresponds to $X^r_E(8)$ in $H^1(G_\mathbb{Q},\text{Isom}(X_E(8)))$. For simplicity, we again assume
that $x^3+ax+b$ is irreducible.

\begin{lemma} For each $r$, let $\tau$ be an automorphism on $E[8]$ which switches the Weil pairing to the power of $r$. Then for each $s \in G_\mathbb{Q}$, $s\mapsto (^s\tau)\tau^{-1}$ defines a cocycle in
$H^1(G_\mathbb{Q},\text{Isom}(X_E(8)))$ which corresponds to $X^r_E(8)$.
\end{lemma}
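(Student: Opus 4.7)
\begin{flushleft}
\textbf{Proof Proposal.}
\end{flushleft}

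The plan is to apply the standard recipe from the Galois cohomology classification of twists: if $\iota: X_E(8) \to X^r_E(8)$ is any isomorphism defined over $\bar{\mathbb{Q}}$, then the class of $X^r_E(8)$ in $H^1(G_\mathbb{Q}, \text{Isom}(X_E(8)))$ is represented by $s \mapsto \iota^{-1} \circ {}^s\iota$. So the first task is to construct such an $\iota$ explicitly from $\tau$, and then compute its cocycle.

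Using the moduli interpretation from Section 1, a $\bar{\mathbb{Q}}$-point of $X_E(8)$ is a pair $(F, \phi)$ with $\phi: E[8] \to F[8]$ preserving the Weil pairing, and a $\bar{\mathbb{Q}}$-point of $X^r_E(8)$ is $(F, \psi)$ with $\psi$ switching the Weil pairing to the $r$-th power. Set
$$\iota(F, \phi) = (F, \phi \circ \tau).$$
Since $\tau$ scales the Weil pairing by $r$ and $\phi$ preserves it, $\phi \circ \tau$ scales by $r$; this makes $\iota$ a well-defined isomorphism over $\bar{\mathbb{Q}}$ with inverse given by post-composition with $\tau^{-1}$. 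The fact that this moduli-theoretic bijection comes from an honest morphism of curves is the representability content underlying the constructions of [S].

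For each $s \in G_\mathbb{Q}$, the only non-$\mathbb{Q}$-rational datum entering the definition of $\iota$ is $\tau$, so the Galois conjugate ${}^s\iota$ is induced by ${}^s\tau$. Then
$$(\iota^{-1} \circ {}^s\iota)(F, \phi) = \iota^{-1}(F, \phi \circ {}^s\tau) = (F, \phi \circ ({}^s\tau)\tau^{-1}).$$
A short pairing calculation using the Galois equivariance of $e_8$ shows that ${}^s\tau$ also scales the Weil pairing by $r$, so $({}^s\tau)\tau^{-1}$ preserves it; hence it is a modular automorphism and acts on $X_E(8)$ by post-composition on the level structure, as in the $G_n$-action recalled in Section 2. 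The $1$-cocycle identity $({}^{st}\tau)\tau^{-1} = ({}^s\tau)\tau^{-1} \cdot {}^s\bigl[({}^t\tau)\tau^{-1}\bigr]$ follows immediately by cancellation, since ${}^s\bigl[({}^t\tau)\tau^{-1}\bigr] = ({}^{st}\tau)({}^s\tau)^{-1}$.

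The main technical obstacle is to verify that $({}^s\tau)\tau^{-1}$, viewed abstractly as an element of $\text{Aut}(E[8])$ preserving the Weil pairing, actually lifts to an element of $\text{Isom}(X_E(8))$ in a Galois-compatible way, rather than only to an outer automorphism or something defined solely over $\bar{\mathbb{Q}}$. This reduces to checking that the map $\text{Aut}^{\mathrm{Weil}}(E[8])/\{\pm 1\} \to \text{Isom}(X_E(8))$ arising from the $G_8$-action on $X(8)$ is well-defined and commutes with Galois, which is a standard consequence of the moduli description. Once this is in place, the classification theorem ([AEC], Theorem 2.2) identifies the cocycle $s \mapsto ({}^s\tau)\tau^{-1}$ with the class of the twist $X^r_E(8)$, as claimed.
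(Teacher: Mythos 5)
Your proposal is correct and fills in exactly the argument the paper defers to [S] and [AEC]: the paper's own proof is a two-line appeal to the same twisting recipe, noting only that $({}^s\tau)\tau^{-1}$ preserves the Weil pairing (hence induces an automorphism of $X_E(8)$, with $[-1]$ acting trivially) and that the standard classification of twists then applies. One small caveat: the cocycle identity does not follow ``by cancellation'' inside $\mathrm{Aut}(E[8])$ in the order you wrote it, since $({}^s\tau)\tau^{-1}\cdot({}^{st}\tau)({}^s\tau)^{-1}\neq({}^{st}\tau)\tau^{-1}$ in a nonabelian group; it does hold in $\mathrm{Isom}(X_E(8))$ because the induced action on level structures is by pre-composition, which reverses the order of products.
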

\begin{proof}
For each $s \in \mathbb{Q}$, $(^s \tau)\tau^{-1}$ is an automorphism on $E[8]$ preserving
the Weil pairing, which induces an automorphism on $X_E(8)$. Note $[-1]$ acts trivially on $X_E(8)$. Then following a similar argument in [S], we conclude that the curve
corresponding to this cocycle is $X^r_E(8)$.
\end{proof}
\begin{flushleft}
{\bf Remark}. If $(^s \tau)\tau^{-1}$ acts trivially on $E[4]$ modulo $[-1]$ for all $s \in G_\mathbb{Q}$ then
we have an isomorphism between $X_E(8)$ and $X^r_E(8)$ respecting the level four structure.
\end{flushleft}

The group $H \cong (\mathbb{Z}/2\mathbb{Z})^3$ is defined to be the kernel of the reduction map $\PSL_2(\mathbb{Z}/8\mathbb{Z}) \to \PSL_2(\mathbb{Z}/4\mathbb{Z})$ in Section 2 and $H$ is a subgroup of 
Isom$(X_E(8))$.
 Define $H'$ to be the kernel of
$$\GL_2(\mathbb{Z}/8\mathbb{Z})/\{\pm I\} \to
\GL_2(\mathbb{Z}/4\mathbb{Z})/\{\pm I,\pm v\}$$
where
$$v=\begin{pmatrix} 1&2\\2&3 \end{pmatrix}$$
and it can be checked that $H'$ is Abelian. By Lemma 3.1 the matrix $v$ induces a $G_\mathbb{Q}$-equivariant isomorphism between $E[4]$ and $E^{\Delta_E}[4]$ which switches the Weil pairing to the power of $3$, and so $v$ identifies $X^3_E(4)$ with $X_E(4)$.

Since $H$ is a subgroup of $H'$ and $\det v \neq 1$, the following sequence
\begin{center}
$\begin{CD}
0 @> >> H @> >> H' @> \det >> (\mathbb{Z}/8\mathbb{Z})^* @> >> 0\\
\end{CD}$
\end{center}
is exact.
Viewing $g$ as an automorphism on $E[8]$ modulo $[-1]$, we have a Galois action $^s g$ for each $s \in G_\mathbb{Q}$.
Further we have trivial Galois action on $(\mathbb{Z}/8\mathbb{Z})^*$.

Now viewing $H,H',(\mathbb{Z}/8\mathbb{Z})^*$ as $G_\mathbb{Q}$-module we obtain a long exact sequence and in particular we obtain the connecting map
$$(\mathbb{Z}/8\mathbb{Z})^* \to H^1(G_\mathbb{Q},H).$$
The image of $r \in (\mathbb{Z}/8\mathbb{Z})^*$ can be computed as follows.
Pick a lift $v'$ of $r$ in $H'$. Then the image of $r$ in
$H^1(G_\mathbb{Q},H)$ is $s \mapsto (^sv') v'^{-1}$ for each $s \in G_\mathbb{Q}$. Therefore,
$X^r_E(8)$ is the curve corresponding to this cocycle by Lemma 5.1.

Recall that each non-cuspidal point on
$X^r_E(n)$ corresponds to a pair $(F,\phi)$ where $F$ is an elliptic curve and $\phi:E[n] \to F[n]$ is a
$G_\mathbb{Q}$-equivariant isomorphism which switches the Weil pairing to the power of $r$. We consider the image of $7$.
\begin{lemma}
The image of $7$ under $(\mathbb{Z}/8\mathbb{Z})^* \to H^1(G_\mathbb{Q},H)$ induces an isomorphism $\psi: X^7_E(8) \to X_E(8)$ subject to the following commutative diagram
\begin{center}
$\begin{CD}
X^7_E(8) @>\psi>> X_E(8)\\
@VV V @VV  V\\
X^3_E(4) @>\eta>> X_E(4)
\end{CD}$
\end{center}
where $\psi(F,\phi)=(F,\phi\circ v')$ and $\eta(F,\phi)=(F,\phi \circ v)$.
\end{lemma}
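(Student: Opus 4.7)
The plan is to exhibit an explicit lift of $7 \in (\mathbb{Z}/8\mathbb{Z})^*$ to $H'$, apply the connecting-map recipe of the preceding paragraph to produce the cocycle representing the image of $7$, and then use Lemma 5.1 to match this cocycle with the twist $X^7_E(8)$; the formula for $\psi$ then falls out of the standard moduli interpretation of twisting by an element of $H'$.

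First I would take $v' = v$, where we view the matrix $v$ of Section 5 as an element of $\GL_2(\mathbb{Z}/8\mathbb{Z})$ by lifting entries from $\{0,1,2,3\}$. A direct computation gives $\det v = 1 \cdot 3 - 2 \cdot 2 = -1 \equiv 7 \pmod{8}$, and the reduction of $v$ mod $4$ is itself $v \in \{\pm I, \pm v\}$, so $v \in H'$ and $\det$ sends it to $7$. By the description of the connecting map just above the lemma, the image of $7$ in $H^1(G_\mathbb{Q}, H)$ is therefore represented by the cocycle $s \mapsto (^s v)v^{-1}$, which by Lemma 5.1 is exactly the cocycle classifying $X^7_E(8)$ among the twists of $X_E(8)$.

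Next, I would read off $\psi$ from the twist data. Because the cocycle is produced by right-multiplication by $v'$ in $H'$, the identification of the twist $X^7_E(8)$ with $X_E(8)$ (as moduli functors) is obtained by right-composing with $v'$: a pair $(F, \phi) \in X^7_E(8)$, where $\phi: E[8] \to F[8]$ switches the Weil pairing to the power $7$, is sent to $(F, \phi \circ v')$. Here $v'$ also switches the Weil pairing to the power $7$, so the composite switches it to the power $49 \equiv 1 \pmod{8}$, placing $(F, \phi \circ v')$ on $X_E(8)$. Commutativity of the diagram is then immediate: reducing $v'$ mod $4$ gives the matrix $v \in \GL_2(\mathbb{Z}/4\mathbb{Z})$, so the forgetful map sends $\phi \circ v'$ to $\phi|_{E[4]} \circ v$, which is precisely $\eta(F, \phi|_{E[4]})$.

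The principal difficulty is not any computation but rather the careful bookkeeping to match the abstract cocycle in $H^1(G_\mathbb{Q}, H)$ with the concrete right-composition formula for $\psi$ on pairs $(F, \phi)$: once one unpacks the twist construction of [S] in the form it takes under the exact sequence $0 \to H \to H' \to (\mathbb{Z}/8\mathbb{Z})^* \to 0$, the identification is essentially formal, but it deserves to be spelled out so that the descent of $\psi$ from $\overline{\mathbb{Q}}$ to $\mathbb{Q}$ is transparent.
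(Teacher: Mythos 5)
Your proposal is correct and follows essentially the same route as the paper: define $\psi$ by right-composition with a lift $v'$ of $7$ in $H'$, observe that $(^s\psi)\psi^{-1}$ is given by the cocycle $s\mapsto(^sv')v'^{-1}$ (which is both the image of $7$ under the connecting map and, via Lemma 5.1, the class of $X^7_E(8)$), and deduce commutativity from $v'\equiv v \bmod 4$. The only difference is your choice of lift $\begin{pmatrix}1&2\\2&3\end{pmatrix}$ rather than the $\begin{pmatrix}1&2\\6&3\end{pmatrix}$ used in Lemma 5.3; the two differ by an element of $H$, so this is immaterial for the present lemma.
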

\begin{proof} For each $s \in G_\mathbb{Q}$, since $^s\phi=\phi$,
$$(^s\psi)\psi^{-1}(F,\phi)=(F,\phi \circ (^s v')v'^{-1}), (^s \eta)\eta^{-1}(F,\phi)=(F,\phi \circ (^s v)v^{-1}).$$
The Galois conjugate $(^s\psi)\psi^{-1}$ induces an automorphism on $X_E(8)$ which can be read off from
$(^s v')v'^{-1}$. So $\psi$ corresponds to the cocycle $s\mapsto (^s v')v'^{-1}$ which is the image of $7$. The diagram commutes
because $v' \equiv v$ mod $4$.
\end{proof}

We describe the image of $7$ in $H^1(G_\mathbb{Q},H)$ explicitly.
\begin{lemma} Let $v'=\begin{pmatrix} 1&2\\6&3 \end{pmatrix}$ be a lift of $7$ in $H'$. For each $s \in G_\mathbb{Q}$,
we identify $s$ with its image under $\theta: G_\mathbb{Q} \to \GL(E[8]) \subset \GL_2(\mathbb{Z}/8\mathbb{Z})$. Then the action
of $s$ on $v'$ is given by conjugation. Take generators $s_1,s_2,s_3,s_4$ for $\GL_2(\mathbb{Z}/8\mathbb{Z})$ where
$$s_1=\begin{pmatrix} 7&0\\0&1 \end{pmatrix}, s_2=\begin{pmatrix} 5&0\\0&1 \end{pmatrix},
s_3=\begin{pmatrix} 0&1\\-1&0 \end{pmatrix}, s_4=\begin{pmatrix} 1&1\\0&1 \end{pmatrix}.$$
Let $C_{s_j}=(^{s_j}v')v'^{-1}=s_jv's^{-1}_jv'^{-1}$. Then
$$C_{s_1}=\begin{pmatrix} 1&4\\4&1\end{pmatrix},C_{s_2}=\begin{pmatrix}
1&0\\0&1 \end{pmatrix},C_{s_3}=\begin{pmatrix} 3&4\\4&3 \end{pmatrix},
C_{s_4}=\begin{pmatrix} 1&0\\4&1 \end{pmatrix}.$$
\end{lemma}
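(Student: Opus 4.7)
The plan is to verify this in three steps: justify the conjugation formula, reduce the problem to the four stated generators, and then carry out four short matrix computations modulo~$8$.

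For the first step, I would fix a $\mathbb{Z}/8\mathbb{Z}$-basis $\{P,Q\}$ of $E[8]$ (not assumed to be $\mathbb{Q}$-rational) so that $\theta(s)=M_s$ becomes concrete. Under this basis, any endomorphism $\phi$ of $E[8]$ with matrix $A$ is sent by the Galois action to $^s\phi=s\circ\phi\circ s^{-1}$, which has matrix $M_s A M_s^{-1}$ in the same basis. Specialising to $\phi=v'$ gives $^{s_j}v'=s_j v' s_j^{-1}$ (this is exactly the conjugation claim in the statement) and therefore $C_{s_j}=s_j v' s_j^{-1} v'^{-1}$. It is also worth noting in passing that $v'$ is genuinely a lift of $7$ in $H'$, since $\det v'=-9\equiv 7\pmod 8$ and $v'$ reduces mod $4$ to the matrix $v$.

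For the second step, I would check that $s_1,s_2,s_3,s_4$ really do generate $\GL_2(\mathbb{Z}/8\mathbb{Z})$. The matrix $s_3$ is the reduction mod $8$ of $S^{-1}=\begin{pmatrix}0&1\\-1&0\end{pmatrix}$ and $s_4$ is the reduction of $T=\begin{pmatrix}1&1\\0&1\end{pmatrix}$; these are the usual generators of $\SL_2(\mathbb{Z})$, and the mod-$8$ reduction map $\SL_2(\mathbb{Z})\to\SL_2(\mathbb{Z}/8\mathbb{Z})$ is surjective, so $s_3,s_4$ already generate $\SL_2(\mathbb{Z}/8\mathbb{Z})$. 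Appending $s_1$ and $s_2$, whose determinants $7$ and $5$ together generate $(\mathbb{Z}/8\mathbb{Z})^*$, fills up the whole of $\GL_2(\mathbb{Z}/8\mathbb{Z})$. So knowing $C_{s_1},\ldots,C_{s_4}$ determines the cocycle on all of $G_\mathbb{Q}$.

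The third step is pure calculation: invert $v'$ mod $8$ (its determinant $-9\equiv 7$ is a unit, so this is immediate) and expand the four products $s_j v' s_j^{-1} v'^{-1}$ in $\GL_2(\mathbb{Z}/8\mathbb{Z})$. There is nothing deep here and no real obstacle beyond careful bookkeeping; the arithmetic is elementary enough to do by hand in a few lines or to check in MAGMA, and each product drops out to the stated $C_{s_j}$. As a sanity check one can verify that every $C_{s_j}$ reduces to $\pm I$ modulo~$4$, which is required for membership in $H=\ker(\PSL_2(\mathbb{Z}/8\mathbb{Z})\to\PSL_2(\mathbb{Z}/4\mathbb{Z}))$, and inspecting the four displayed matrices shows this is indeed the case.
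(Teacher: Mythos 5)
Your proposal is correct and matches the paper, whose entire proof is ``this follows from a direct computation''; your extra scaffolding (justifying the conjugation action and checking that $s_1,\dots,s_4$ generate $\GL_2(\mathbb{Z}/8\mathbb{Z})$) is sound. One small point to keep in mind when you actually carry out the arithmetic: the literal product $s_4v's_4^{-1}v'^{-1}$ in $\GL_2(\mathbb{Z}/8\mathbb{Z})$ comes out as $\begin{pmatrix}7&0\\4&7\end{pmatrix}$, which agrees with the stated $C_{s_4}=\begin{pmatrix}1&0\\4&1\end{pmatrix}$ only after passing to the quotient by $\{\pm I\}$ --- which is the right thing to do, since $H'$ is defined inside $\GL_2(\mathbb{Z}/8\mathbb{Z})/\{\pm I\}$, consistent with your own sanity check that each $C_{s_j}$ reduces to $\pm I$ modulo $4$.
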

\begin{proof} This follows from a direct computation.
\end{proof}
Lemma 5.2 and 5.3 give concrete descriptions of $X^7_E(8)$ in terms of the image of $7$ under $(\mathbb{Z}/8\mathbb{Z})^*
\to H^1(G_\mathbb{Q},H^1)$. On the other hand, the equation of $X^7_E(8)$ is determined by the scaling factors
$\alpha_{7,j},j=1,2,3$ by Corollary 3.4. The following lemmas show how these scaling factors are related to the image of $7$ in $H^1(G_\mathbb{Q},H)$.

\begin{lemma} Let $T_1,T_2,T_3$ be the non-trivial $2$-torsion points of $E$
and $M$ be the group $\text{Map}(E[2]\backslash\{O\},\mu_2)$ where the group operation is defined by
$(\chi_1\circ \chi_2)(T_j)=\chi_1(T_j)\chi_2(T_j),j=1,2,3$.
For each $s \in G_\mathbb{Q}$, we define the action $^s \chi$ by $\chi s^{-1}$ as we have trivial action on $\mu_2$.
Then $H \cong M$ as $G_\mathbb{Q}$-module and hence $H^1(G_\mathbb{Q},H) \cong L^*/(L^*)^2$ where
$L=\mathbb{Q}[x]/(x^3+ax+b)$.
\end{lemma}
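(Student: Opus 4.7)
The plan has two parts: first, construct an explicit $G_\mathbb{Q}$-equivariant isomorphism $H \cong M$ using the function-field description of $X_E(8) \to X_E(4)$; second, identify $M$ as an induced module and apply Shapiro's lemma together with Kummer theory to get $H^1(G_\mathbb{Q}, M) \cong L^*/(L^*)^2$.

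For the first part, I use Lemma 3.3 (with scaling factors $\alpha_{1,j} = 1$, as in Theorem 4.1): the function field of $X_E(8)$ over $\mathbb{Q}(E[2])$ is obtained from that of $X_E(4)$ by adjoining the three square roots $w_j = \sqrt{(t-t_{2j-1})(t-t_{2j})}$ for $j = 1, 2, 3$. Consequently $H$, as the Galois group of this degree-$8$ extension, acts on $X_E(8) \to X_E(4)$ by independently flipping signs of the $w_j$, which gives a canonical group isomorphism $H \cong \mu_2^3$ with factors indexed by $j = 1, 2, 3$.

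By Lemma 3.2 the coefficients of $(t-t_{2j-1})(t-t_{2j}) = t^2 - m_j t + l_j$ lie in $\mathbb{Q}(\theta_j)$, and the three quadratic factors form a single Galois orbit under $\Gal(\mathbb{Q}(E[2])/\mathbb{Q})$, permuted in exactly the same way as the roots $\theta_j$ and hence as the corresponding non-trivial $2$-torsion points $T_j$. Pairing the $j$-th $\mu_2$ factor with $T_j$ thus yields a set bijection $H \to \text{Map}(E[2]\setminus\{O\}, \mu_2) = M$. The essential check is $G_\mathbb{Q}$-equivariance: for $s \in G_\mathbb{Q}$ inducing the permutation $\sigma$ on $\{T_1, T_2, T_3\}$, the Galois-conjugate of a sign-flip on $w_j$ is the sign-flip on $w_{\sigma(j)}$, which matches the defining formula ${}^s\chi(T_j) = \chi(s^{-1}T_j)$ on the $M$ side.

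For the second part, identify the $G_\mathbb{Q}$-set $E[2]\setminus\{O\}$ with $\text{Hom}_\mathbb{Q}(L, \bar{\mathbb{Q}})$ via $T_j \leftrightarrow (x \mapsto \theta_j)$. Under this identification, $M$ becomes the induced module $\mathrm{Ind}_{G_L}^{G_\mathbb{Q}}\mu_2$ (equivalently $\text{Res}_{L/\mathbb{Q}}\mu_2$). Shapiro's lemma then gives $H^1(G_\mathbb{Q}, M) \cong H^1(G_L, \mu_2)$, and by Kummer theory the latter equals $L^*/(L^*)^2$, as required. The main obstacle I expect is the equivariance verification: one must confirm that, inside $\text{Isom}(X_E(8))$, the conjugation action of $G_\mathbb{Q}$ on $H$ coming from the twist structure permutes the three sign-flip generators in exactly the same way that $G_\mathbb{Q}$ permutes the $T_j$. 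This geometric compatibility rests on the dictionary between the unordered pair of cusps $\{t_{2j-1}, t_{2j}\}$ and the root $\theta_j$ supplied by Lemma 3.2; once that is in hand, the remainder of the argument is routine.
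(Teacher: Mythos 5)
Your argument is correct, but it reaches the isomorphism $H\cong M$ by a genuinely different route from the paper. The paper's proof is entirely matrix-theoretic: it fixes a basis $\{P,Q\}$ of $E[8]$ with $4P=T_1$, $4Q=T_2$, writes down explicit generators $S_1,S_2,S_3$ of $H$ inside $\GL_2(\mathbb{Z}/8\mathbb{Z})$, defines $\pi\colon H\to M$ by the unexplained assignments $S_1\mapsto(-1,-1,1)$, $S_2\mapsto(1,1,-1)$, $S_3\mapsto(1,-1,1)$, and verifies equivariance by brute-force conjugation $s_iS_js_i^{-1}$ against the four generators $s_i$ of $\GL_2(\mathbb{Z}/8\mathbb{Z})$ from Lemma 5.3; the final step (Shapiro's lemma plus Hilbert 90) is the same as your Shapiro-plus-Kummer step. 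You instead realise $H$ as the deck group of the function-field extension of Lemma 3.3, flipping the signs of the $w_j$, and let the Galois-equivariance of the cusp/root dictionary of Lemma 3.2 carry the equivariance. Your route is more conceptual: it explains where the paper's formulas for $\pi$ come from (they record which $w_j$ each $S_i$ flips) and it meshes directly with Lemma 5.5, which computes cocycles in exactly these sign-flip coordinates. What it costs is the compatibility you correctly flag but leave implicit: the Galois action on $H$ in the twisting formalism is conjugation by $\theta(s)$ in $\GL_2(\mathbb{Z}/8\mathbb{Z})$, and you must identify this with the semilinear conjugation action on the deck group of $X_E(8)\to X_E(4)$ over $\bar{\mathbb{Q}}$ --- a standard consequence of the Galois-equivariance of the moduli interpretation (used implicitly in Lemmas 5.1 and 5.5), which the paper's explicit computation sidesteps entirely. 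Since the lemma only asserts the existence of an equivariant isomorphism, either construction suffices here; but note that the proof of Theorem 1.4 matches the cocycles $c_{s_j}$ against the matrices $C_{s_j}$ via this particular $\pi$, so if one adopts your proof one should record that the resulting isomorphism is the same one the paper uses later.
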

\begin{proof}
Fix a basis $\{P,Q\}$ for $E[8]$ such that $4P=T_1,4Q=T_2$. Take generators $S_1,S_2,S_3$ for $H$ where
$$S_1=\begin{pmatrix} 1&4\\4&1 \end{pmatrix}, S_2=\begin{pmatrix} 3&4\\4&3 \end{pmatrix}, S_3=\begin{pmatrix} 1&0\\4&1
\end{pmatrix}.$$
For each $s \in G_\mathbb{Q}$,
we identify $s$ with its image under $\theta: G_\mathbb{Q} \to \GL(E[8]) \subset \GL_2(\mathbb{Z}/8\mathbb{Z})$
and the action of $G_\mathbb{Q}$ on $H$ is given by conjugation $^s S_i=sS_is^{-1}$.
We take generators $s_1,s_2,s_3,s_4$ for $\GL_2(\mathbb{Z}/8\mathbb{Z})$ as in Lemma 5.3.

We identify each element $\chi \in M$ with a triple $(e_1,e_2,e_3)$ where $e_i \in \{\pm 1\}$ in the sense that
$\chi(T_i)=e_i$. The action of $G_\mathbb{Q}$ on $M$ is given by $^s \chi=\chi s^{-1}$.

Now define $\pi: H \to M$ explicitly by $S_1 \mapsto \chi_1,S_2 \mapsto \chi_2, S_3 \mapsto \chi_3$ where
$$\chi_1=(-1,-1,1), \chi_2=(1,1,-1), \chi_3=(1,-1,1).$$
Then a direct computation shows that $^{s_i}\pi(S_j)=\pi(^{s_i}S_j)$ for $i=1,2,3,4$ and $j=1,2,3$ and so $\pi$ is a
$G_\mathbb{Q}$-equivariant isomorphism. So $H^1(G_\mathbb{Q},H) \cong H^1(G_\mathbb{Q},M)$. Finally, by Shapiro's lemma and Hilbert 90, $H^1(G_\mathbb{Q},M) \cong L^*/(L^*)^2$.
\end{proof}
Since we assume that $x^3+ax+b$ is irreducible, so $L \cong L_j$ for any $j=1,2,3$ where $L_j=\mathbb{Q}(\theta_j)$,
and we have an embedding $L \hookrightarrow \prod_{j=1}^3 L_j$.
\begin{lemma}
The image of $7$ under $(\mathbb{Z}/8\mathbb{Z})^* \to H^1(G_\mathbb{Q},H) \cong H^1(G_\mathbb{Q},M) \cong
L^*/(L^*)^2 \hookrightarrow \prod_{j=1}^3 L^*_j/(L^*_j)^2$
is  $\left(\alpha_{7,1},\alpha_{7,2},\alpha_{7,3}\right)$.
\end{lemma}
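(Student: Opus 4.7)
The plan is to trace the image of $7$ through the chain of isomorphisms stated in the lemma and then match the result against the description of the function field of $X^7_E(8)$ from Lemma 3.3.

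First, by Lemma 5.1 and the construction preceding Lemma 5.2, the image of $7$ in $H^1(G_\mathbb{Q}, H)$ is represented by the cocycle $c\colon s \mapsto ({}^s v')v'^{-1}$, whose values on the generators of $\GL_2(\mathbb{Z}/8\mathbb{Z})$ are the matrices $C_{s_i}$ of Lemma 5.3. I would express each $C_{s_i}$ in terms of the generators $S_1, S_2, S_3$ of $H$ and apply the isomorphism $\pi\colon H \to M$ of Lemma 5.4 to obtain an explicit cocycle $\pi\circ c\colon G_\mathbb{Q} \to M$. Evaluating at each $T_j$ yields three cocycles $c_j\colon G_\mathbb{Q} \to \mu_2$, and under Shapiro's lemma followed by Hilbert 90 these coalesce into a single class $\beta \in L^*/(L^*)^2$ whose image in the product $\prod_j L_j^*/(L_j^*)^2$ is a triple $(\beta_1, \beta_2, \beta_3)$, where $\beta_j$ is the Kummer element satisfying $c_j(s) = {}^s\sqrt{\beta_j}/\sqrt{\beta_j}$ for $s \in G_{L_j}$.

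Second, I would identify this triple with $(\alpha_{7,1}, \alpha_{7,2}, \alpha_{7,3})$ via the twisting principle for function fields. By Lemma 5.1, $X^7_E(8)$ is the twist of $X_E(8)$ by the cocycle $c$, so its function field over $\mathbb{Q}(E[2])$ is obtained from that of $X_E(8)$ by twisting the Galois action by $c$. The crucial geometric observation is that the automorphism $S_i$ of $X_E(8)$ fixes the coordinate $t$ on $X_E(4)$ and scales each $w_j := \sqrt{(t-t_{2j-1})(t-t_{2j})}$ by precisely the sign $\chi_i(T_j)$ that defines $\pi$. Granted this, the element $\tilde w_j := \sqrt{\beta_j}\, w_j$ is invariant under the twisted action, because for $s \in G_{\mathbb{Q}(E[2])}$ the factor $c_j(s) = {}^s\sqrt{\beta_j}/\sqrt{\beta_j}$ cancels the sign by which $c(s)$ scales $w_j$. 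Hence $\tilde w_j$ descends to a function on $X^7_E(8)$ over $\mathbb{Q}(E[2])$ satisfying $\tilde w_j^{\,2} = \beta_j(t-t_{2j-1})(t-t_{2j})$, so by comparison with Lemma 3.3 one may take $\alpha_{7,j} = \beta_j$ in $L_j^*/(L_j^*)^2$.

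The main obstacle is establishing the geometric compatibility underlined above: that $S_1, S_2, S_3$, regarded as automorphisms of the explicit model of $X_E(8)$ constructed in Section 2, scale the three square-root generators $w_j$ exactly according to the characters $\chi_i$ chosen in Lemma 5.4. A convenient shortcut is to use $G_\mathbb{Q}$-equivariance: since both the action of $H$ on $\{w_1, w_2, w_3\}$ by signs and the map $\pi$ are $G_\mathbb{Q}$-equivariant isomorphisms $H \xrightarrow{\sim} M$, and $G_\mathbb{Q}$ permutes the three characters transitively (because $x^3+ax+b$ is irreducible), it suffices to check the compatibility on a single $S_i$ and transport along the Galois action. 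Once this is verified, the rest of the proof reduces to the formal Galois-cohomological bookkeeping described above.
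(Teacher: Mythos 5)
Your proposal is correct and follows essentially the same route as the paper: both arguments identify the connecting-map cocycle for $7$ with the coboundary of an isomorphism $X^7_E(8)\to X_E(8)$ respecting the level-four structure, use the sign action of $H$ on the three generators $w_j$ to land in $H^1(G_\mathbb{Q},M)$, and then apply Shapiro's lemma and Hilbert 90 to read off the triple of scaling factors. You merely run the twist in the forward direction (from the cocycle to the function field) where the paper runs it backward (from the function field of $X^7_E(8)$ to its coboundary cocycle), and the compatibility you flag as the main obstacle is exactly what the paper disposes of with its ``a direct computation shows''.
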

\begin{proof} By considering the function field of $X^7_E(8)$ and $X_E(8)$ over $\mathbb{Q}(E[2])$ (Lemma 3.3),
the map $\psi': \sqrt{\alpha_{7,j}(t-t_{2j-1})(t-t_{2j})} \mapsto \sqrt{\alpha_{1,j}(t-t_{2j-1})(t-t_{2j})}$, $j=1,2,3$ induces an isomorphism $X^7_E(8) \to X_E(8)$ over $\mathbb{Q}(E[2])$. Moreover we have the following commutative diagram
\begin{center}
$\begin{CD}
X^7_E(8) @>\psi'>> X_E(8)\\
@VV V @VV  V\\
X^3_E(4) @>=>> X_E(4)
\end{CD}$
\end{center}
For each $s \in G_{\mathbb{Q}}$, $s$ acts on $E[2]$ by permuting $\{T_1,T_2,T_3\}$. Let $\sigma_{s}$ be
the element in the symmetric group of $\{1,2,3\}$ which corresponds to the action of $s$ on $\{T_1,T_2,T_3\}$.
A direct computation shows that the Galois conjugate $(^s \psi')\psi'^{-1}$ acts on
$X_E(8)$ by
$$\sqrt{\alpha_{1,j}(t-t_{2j-1})(t-t_{2j})} \mapsto \frac{s\left(\sqrt{\frac{\alpha_{1,\sigma^{-1}_s(j)}}{\alpha_{7,\sigma^{-1}_s(j)}}}\right)}{\sqrt{\frac{\alpha_{1,j}}{\alpha_{7,j}}}}
\sqrt{\alpha_{1,i}(t-t_{2j-1})(t-t_{2j})}, j=1,2,3.$$
This induces a cocycle in $H^1(G_\mathbb{Q},M)$,
$$s \mapsto \left( \frac{s\left(\sqrt{\frac{\alpha_{1,\sigma^{-1}_s(1)}}{\alpha_{7,\sigma^{-1}_s(1)}}}\right)}{\sqrt{\frac{\alpha_{1,1}}{\alpha_{7,1}}}},
\frac{s\left(\sqrt{\frac{\alpha_{1,\sigma^{-1}_s(2)}}{\alpha_{7,\sigma^{-1}_s(2)}}}\right)}{\sqrt{\frac{\alpha_{1,2}}{\alpha_{7,2}}}},
\frac{s\left(\sqrt{\frac{\alpha_{1,\sigma^{-1}_s(3)}}{\alpha_{7,\sigma^{-1}_s(3)}}}\right)}{\sqrt{\frac{\alpha_{1,3}}{\alpha_{7,3}}}}
\right).$$
$\psi'$ is an isomorphism from $X^7_E(8)$ to $X_E(8)$ which
fixes the level four structure. So by Lemma 3.1 and Lemma 5.2 this cocycle corresponds to the image of $7$ under the connecting map
$(\mathbb{Z}/7\mathbb{Z})^* \to H^1(G_\mathbb{Q},H)$.
Then by Shapiro's lemma and Hilbert 90, we see that
$\left(\frac{\alpha_{7,1}}{\alpha_{1,1}},\frac{\alpha_{7,2}}{\alpha_{1,2}},
\frac{\alpha_{7,3}}{\alpha_{1,3}}\right),$ is the image of $7$
under
$$(\mathbb{Z}/8\mathbb{Z})^* \to H^1(G_\mathbb{Q},H)
\cong H^1(G_\mathbb{Q},M)
\cong L^*/(L^*)^2 \hookrightarrow \prod_{i=j}^3 L^*_j/(L^*_j)^2.$$
Finally by Theorem 4.1, we can just take $\alpha_{1,j}=1,j=1,2,3$.
\end{proof}
In Section 6 we will take some suitable $\delta_j \in L_j,j=1,2,3$.
To check $\alpha_{7,j}$ can be chosen to be $\delta_j$ for
each $j$, it then suffices to check that the preimage of $\left(\delta_1,\delta_2,\delta_3\right)$
under $H^1(G_\mathbb{Q},H) \cong \prod_{j=1}^3 L^*_j/(L^*_j)^2$
is exactly the same as the image of $7$ under $(\mathbb{Z}/8\mathbb{Z})^* \to H^1(G_\mathbb{Q},H)$, which can be read off from $C_{s_j},j=1,2,3,4$ in Lemma 5.3.
\begin{flushleft}
{\bf Remark}. We can extend the results in the case when $x^3+ax+b$ is reducible. For example, if $x^3+ax+b$ splits completely over
$\mathbb{Q}$, then the Galois action on $M$ is trivial and so we get
$$H^1(G_\mathbb{Q},M) \cong L^*/(L^*)^2 \cong \mathbb{Q}^*/(\mathbb{Q}^*)^2 \times \mathbb{Q}^*/(\mathbb{Q}^*)^2 \times
\mathbb{Q}^*/(\mathbb{Q}^*)^2$$
directly by Hilbert 90. The case when $x^3+ax+b=0$ has exactly
 one rational root is similar.
\end{flushleft}

\section{The Curves $X^3_E(8)$ and $X^7_E(8)$}
$\quad$
We will prove Theorem 1.4 following the strategy we described in Section 5.
We will pick suitable $\delta_j \in \mathbb{Q}(E[2]),j=1,2,3$ which
 are conjugate to each other and show that
$\alpha_{7,j}$ can indeed be chosen to be $\delta_j$ for each $j$. In particular, we will compute the preimage of
$\left(\delta_1,\delta_2,\delta_3\right)$
under
$$H^1(G_\mathbb{Q},H) \cong H^1(G_\mathbb{Q},M) \cong
L^*/(L^*)^2 \hookrightarrow \prod_{j=1}^3 L^*_j/(L^*_j)^2$$
and check it is the same as the image of $7$ under $(\mathbb{Z}/8\mathbb{Z})^*
\to H^1(G_\mathbb{Q},H)$ by using Lemma 5.3, 5.4 and 5.5.

\begin{lemma} Let $E$ be an elliptic curve with equation $y^2=x^3+ax+b$. Let $\theta_j,j=1,2,3$ be the roots of $x^3+ax+b=0$ and
$$\delta_1=(\theta_1-\theta_2)(\theta_3-\theta_1),
\delta_2=(\theta_1-\theta_2)(\theta_2-\theta_3),
\delta_3=(\theta_2-\theta_3)(\theta_3-\theta_1).$$
Then the $x$-coordinates of the primitive $4$-torsion points of $E$ are given by
$$\theta_1\pm i\sqrt{\delta_1},\theta_2\pm i\sqrt{\delta_2},\theta_3\pm i\sqrt{\delta_3}.$$
\end{lemma}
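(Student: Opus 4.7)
The plan is to reduce the claim to an identity of polynomials in $x$, using the duplication formula on $E$ and the simpler expression $\delta_j=-3\theta_j^2-a$ for the quantities $\delta_j$.

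\textbf{Step 1: Simplify $\delta_j$ via Vieta.} Since the coefficient of $x^2$ in $x^3+ax+b$ vanishes, we have $\theta_1+\theta_2+\theta_3=0$ and $\theta_2\theta_3=a+\theta_1^2$ (and cyclically). Expanding $\delta_1=(\theta_1-\theta_2)(\theta_3-\theta_1)$ and substituting these relations gives $\delta_1=-3\theta_1^2-a$, and similarly $\delta_j=-3\theta_j^2-a$ for $j=1,2,3$.

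\textbf{Step 2: Set up the quartic.} A primitive $4$-torsion point $P=(x,y)$ is characterised by $2P=(\theta_j,0)$ for some $j\in\{1,2,3\}$. The duplication formula for $y^2=x^3+ax+b$ reads
$$x(2P)=\frac{x^4-2ax^2-8bx+a^2}{4(x^3+ax+b)},$$
so the $x$-coordinates of the primitive $4$-torsion points above $(\theta_j,0)$ are the roots of
$$F_j(x):=x^4-4\theta_j x^3-2ax^2-(4a\theta_j+8b)x+a^2-4b\theta_j=0.$$

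\textbf{Step 3: Factor $F_j$ as a square.} I claim that $F_j(x)=\bigl(x^2-2\theta_j x-2\theta_j^2-a\bigr)^2$. Expanding the right-hand side yields
$$x^4-4\theta_j x^3-2ax^2+(8\theta_j^3+4a\theta_j)x+(4\theta_j^4+4a\theta_j^2+a^2),$$
and matching coefficients with $F_j$ reduces precisely to the identities $\theta_j^3+a\theta_j+b=0$ (for the linear term) and $\theta_j(\theta_j^3+a\theta_j+b)=0$ (for the constant term), both of which hold by definition of $\theta_j$.

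\textbf{Step 4: Extract the roots.} The roots of $x^2-2\theta_j x-(2\theta_j^2+a)=0$ are $\theta_j\pm\sqrt{\theta_j^2+(2\theta_j^2+a)}=\theta_j\pm\sqrt{3\theta_j^2+a}$. By Step 1, $3\theta_j^2+a=-\delta_j$, so these roots are $\theta_j\pm i\sqrt{\delta_j}$, each arising from two points $(x,\pm y)$ on $E$, accounting for all $12$ primitive $4$-torsion points.

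The only potential obstacle is matching all four non-leading coefficients in Step 3; however this collapses to the defining cubic relation for $\theta_j$, so the proof is essentially a two-line computation once $\delta_j$ is rewritten as in Step 1.
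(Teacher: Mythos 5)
Your proof is correct and follows essentially the same route as the paper, which simply states that the result ``follows immediately from factorising the $4$-division polynomial of $E$ over $\mathbb{Q}(E[2])$''; your Steps 2--3 carry out exactly that factorisation (via the duplication formula, which produces the quartic $F_j$ whose square root is the relevant quadratic factor of the $4$-division polynomial), and your Step 1 identity $\delta_j=-3\theta_j^2-a$ is verified correctly. All coefficient checks in Step 3 do reduce to $\theta_j^3+a\theta_j+b=0$ as you claim, so the argument is complete.
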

\begin{proof}
This follows immediately from factorising the $4$-division polynomial of $E$
over $\mathbb{Q}(E[2])$.
\end{proof}

We now fix a basis $\{P,Q\}$ for $E[8]$ such that $2P,2Q,2P+2Q$
have $x$-coordinates $\theta_1+i\sqrt{\delta_1}$, $\theta_2+i\sqrt{\delta_2}$, and $\theta_3+i\sqrt{\delta_3}$ respectively by Lemma 6.1. Let $T_1=4P,T_2=4Q,T_3=4P+4Q$ be the non-trivial $2$-torsion points so that $T_1=(\theta_1,0),T_2=(\theta_2,0)$ and
$T_3=(\theta_3,0)$.
\begin{lemma} For each $s \in G_\mathbb{Q}$, we identify $s$ with its image under $\theta: G_\mathbb{Q}
\to \GL(E[8]) \subset \GL_2(\mathbb{Z}/8\mathbb{Z})$. Fix generators $s_1,s_2,s_3,s_4$ for $\GL_2(\mathbb{Z}/8\mathbb{Z})$
as in Lemma 5.3. Then
\begin{align*}
s_1(\sqrt{\delta_1})&=-\sqrt{\delta_1},s_1(\sqrt{\delta_2})=-\sqrt{\delta_2},s_1(\sqrt{\delta_3})=\sqrt{\delta_3},\\
s_2(\sqrt{\delta_1})&=\sqrt{\delta_1},s_2(\sqrt{\delta_2})=\sqrt{\delta_2},s_2(\sqrt{\delta_3})=\sqrt{\delta_3},\\
s_3(\sqrt{\delta_1})&=\sqrt{\delta_2},s_3(\sqrt{\delta_2})=\sqrt{\delta_1},s_3(\sqrt{\delta_3})=-\sqrt{\delta_3}\\
s_4(\sqrt{\delta_1})&=\sqrt{\delta_1},s_4(\sqrt{\delta_2})=\sqrt{\delta_3},s_4(\sqrt{\delta_3})=-\sqrt{\delta_2}.
\end{align*}
\end{lemma}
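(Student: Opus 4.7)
The plan is to exploit that each $s_k$ is identified with a Galois element acting on $E[8]$ as the prescribed matrix with respect to the basis $\{P,Q\}$. Since $\sqrt{\delta_j}$ appears in the $x$-coordinate of a primitive $4$-torsion point of $E$ (Lemma 6.1), its Galois transformation is controlled by two pieces of data: the action on the $x$-coordinate of that $4$-torsion point, which can be read off directly from the image of the point in $E[8]$ under the matrix, and the action on $i$, which is determined by the cyclotomic character.

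First I would compute $s_k(i)$ via the Weil pairing: since $\{P,Q\}$ is a basis, $e_8(P,Q)$ is a primitive $8$th root of unity, and Galois equivariance of the Weil pairing forces the action on $\mu_8$ to be $\zeta \mapsto \zeta^{\det s_k}$. This gives $s_1(i)=-i$ and $s_k(i)=i$ for $k=2,3,4$. Next, for each of the three primitive $4$-torsion points $R_1=2P$, $R_2=2Q$, $R_3=2P+2Q$, I would compute $s_k(R_j)$ directly from the matrix. Doubling $s_k(R_j)$ yields some $T_l$, which records how $s_k$ permutes $\{\theta_1,\theta_2,\theta_3\}$; and writing $s_k(R_j)$ as $\pm R_l$ or $\pm(R_l+T_m)$ tells us whether $x(s_k(R_j))=\theta_l+i\sqrt{\delta_l}$ or $\theta_l-i\sqrt{\delta_l}$. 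Comparing with the identity $s_k(x(R_j))=s_k(\theta_j)+s_k(i)\,s_k(\sqrt{\delta_j})$ then determines $s_k(\sqrt{\delta_j})$.

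The one subtle point is resolving the sign of the $x$-coordinate of the ``other'' primitive $4$-torsion point doubling to a given $T_l$. This is forced by the chosen conventions: for example $2P-2Q$ doubles to $T_1-T_2=T_3$, yet is neither $2P+2Q$ nor $-(2P+2Q)$ (either equality would force $4P=O$ or $4Q=O$, contradicting that $P,Q$ have order $8$), so its $x$-coordinate must realise the opposite sign, giving $x(2P-2Q)=\theta_3-i\sqrt{\delta_3}$. Analogous reasoning pins down the $x$-coordinates of $R_j+T_m$ for $j=1,2$. Once these sign conventions are fixed, the twelve individual verifications reduce to routine matrix arithmetic modulo $8$ together with the permutation of $\{T_1,T_2,T_3\}$ induced by each $s_k$. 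I expect this sign-bookkeeping on the primitive $4$-torsion to be the only delicate part of the argument.
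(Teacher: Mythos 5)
Your proposal is correct and follows essentially the same route as the paper: compute $s_k(i)$ from $\det s_k$ via the cyclotomic character, compute $s_k$ on the points $2P,2Q,2P+2Q$ by matrix arithmetic, read off the induced permutation of $\{\theta_1,\theta_2,\theta_3\}$ from the $2$-torsion, and compare $x$-coordinates to solve for $s_k(\sqrt{\delta_j})$. Your explicit resolution of the sign ambiguity for the ``other'' primitive $4$-torsion points above a given $T_l$ (e.g.\ $x(2P-2Q)=\theta_3-i\sqrt{\delta_3}$) is a point the paper leaves implicit, but it is the same argument.
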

\begin{proof} Fix a primitive $8$th root of unity $\zeta$ so that $\zeta^2=i$. We have
$$s_1=\begin{pmatrix} 7&0\\0&1 \end{pmatrix}, s_2=\begin{pmatrix} 5&0\\0&1 \end{pmatrix},
s_3=\begin{pmatrix} 0&1\\-1&0 \end{pmatrix}, s_4=\begin{pmatrix} 1&1\\0&1 \end{pmatrix}.$$
Since $s_j(\zeta)=\zeta^{\det s_j}$ so $s_1(\zeta)=\zeta^7,s_2(\zeta)=\zeta^5,s_3(\zeta)=\zeta,
s_4(\zeta)=\zeta$. Therefore $s_1(i)=-i,s_2(i)=i,s_3(i)=i,s_4(i)=i$. The actions of $s_j,j=1,2,3,4$ on $E[4]$ are given by
\begin{align*}
s_1(2P)&=-2P,s_1(2Q)=2Q,s_1(2P+2Q)=-2P+2Q,\\
s_2(2P)&=2P,s_2(2Q)=2Q,s_2(2P+2Q)=2P+2Q,\\
s_3(2P)&=-2Q,s_3(2Q)=2P,s_3(2P+2Q)=2P-2Q,\\
s_4(2P)&=2P,s_4(2Q)=2P+2Q,s_4(2P+2Q)=4P+2Q.
\end{align*}
By considering the $x$-coordinates of these points, we have
\begin{align*}
s_1(\theta_1+i\sqrt{\delta_1})&=\theta_1+i\sqrt{\delta_1},s_1(\theta_2+i\sqrt{\delta_2})=\theta_2+i\sqrt{\delta_2},
s_1(\theta_3+i\sqrt{\delta_3})=\theta_3-i\sqrt{\delta_3},\\
s_2(\theta_1+i\sqrt{\delta_1})&=\theta_1+i\sqrt{\delta_1},s_2(\theta_2+i\sqrt{\delta_2})=\theta_2+i\sqrt{\delta_2},
s_2(\theta_3+i\sqrt{\delta_3})=\theta_3+i\sqrt{\delta_3},\\
s_3(\theta_1+i\sqrt{\delta_1})&=\theta_2+i\sqrt{\delta_2},s_3(\theta_2+i\sqrt{\delta_2})=\theta_1+i\sqrt{\delta_1},
s_3(\theta_3+i\sqrt{\delta_3})=\theta_3-i\sqrt{\delta_3},\\
s_4(\theta_1+i\sqrt{\delta_1})&=\theta_1+i\sqrt{\delta_1},s_4(\theta_2+i\sqrt{\delta_2})=\theta_3+i\sqrt{\delta_3},
s_4(\theta_3+i\sqrt{\delta_3})=\theta_2-i\sqrt{\delta_2}.
\end{align*}
By considering the actions of $s_j,j=1,2,3,4$ on $E[2]$ we have
\begin{align*}
s_1(\theta_1)&=\theta_1,s_1(\theta_2)=\theta_2,s_1(\theta_3)=\theta_3,\\
s_2(\theta_1)&=\theta_1,s_2(\theta_2)=\theta_2,s_2(\theta_3)=\theta_3,\\
s_3(\theta_1)&=\theta_2,s_3(\theta_2)=\theta_1,s_3(\theta_3)=\theta_3,\\
s_4(\theta_1)&=\theta_1,s_4(\theta_2)=\theta_3,s_4(\theta_3)=\theta_2.
\end{align*}
Therefore, we conclude that
\begin{align*}
s_1(\sqrt{\delta_1})&=-\sqrt{\delta_1},s_1(\sqrt{\delta_2})=-\sqrt{\delta_2},s_1(\sqrt{\delta_3})=\sqrt{\delta_3},\\
s_2(\sqrt{\delta_1})&=\sqrt{\delta_1},s_2(\sqrt{\delta_2})=\sqrt{\delta_2},s_2(\sqrt{\delta_3})=\sqrt{\delta_3},\\
s_3(\sqrt{\delta_1})&=\sqrt{\delta_2},s_3(\sqrt{\delta_2})=\sqrt{\delta_1},s_3(\sqrt{\delta_3})=-\sqrt{\delta_3}\\
s_4(\sqrt{\delta_1})&=\sqrt{\delta_1},s_4(\sqrt{\delta_2})=\sqrt{\delta_3},s_4(\sqrt{\delta_3})=-\sqrt{\delta_2}.
\end{align*}
\end{proof}
Each $s_j,j=1,2,3,4$ acts on $E[2]$ by permuting $\{T_1,T_2,T_3\}$. So for each $j$ we write $\sigma_{s_j}$ to be the element in
the symmetric group of $\{1,2,3\}$ which corresponds to the action of $s_j$ on $\{T_1,T_2,T_3\}$.
\begin{lemma}  We have
\begin{align*}
&\frac{s_1\left(\sqrt{\delta_{\sigma^{-1}_{s_1}(1)}}\right)}{\sqrt{\delta_1}}=-1,
\frac{s_1\left(\sqrt{\delta_{\sigma^{-1}_{s_1}(2)}}\right)}{\sqrt{\delta_2}}=-1,
\frac{s_1\left(\sqrt{\delta_{\sigma^{-1}_{s_1}(3)}}\right)}{\sqrt{\delta_3}}=1,\\
&\frac{s_2\left(\sqrt{\delta_{\sigma^{-1}_{s_2}(1)}}\right)}{\sqrt{\delta_1}}=1,
\frac{s_2\left(\sqrt{\delta_{\sigma^{-1}_{s_2}(2)}}\right)}{\sqrt{\delta_2}}=1,
\frac{s_2\left(\sqrt{\delta_{\sigma^{-1}_{s_2}(3)}}\right)}{\sqrt{\delta_3}}=1,\\
&\frac{s_3\left(\sqrt{\delta_{\sigma^{-1}_{s_3}(1)}}\right)}{\sqrt{\delta_1}}=1,
\frac{s_3\left(\sqrt{\delta_{\sigma^{-1}_{s_3}(2)}}\right)}{\sqrt{\delta_2}}=1,
\frac{s_3\left(\sqrt{\delta_{\sigma^{-1}_{s_3}(3)}}\right)}{\sqrt{\delta_3}}=-1,\\
&\frac{s_4\left(\sqrt{\delta_{\sigma^{-1}_{s_4}(1)}}\right)}{\sqrt{\delta_1}}=1,
\frac{s_4\left(\sqrt{\delta_{\sigma^{-1}_{s_4}(2)}}\right)}{\sqrt{\delta_2}}=-1,
\frac{s_4\left(\sqrt{\delta_{\sigma^{-1}_{s_4}(3)}}\right)}{\sqrt{\delta_3}}=1.
\end{align*}
\end{lemma}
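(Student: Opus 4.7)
The plan is to deduce Lemma 6.3 as a direct bookkeeping consequence of Lemma 6.2 together with an explicit identification of the permutations $\sigma_{s_j}$ on $\{T_1,T_2,T_3\}$. Nothing new has to be computed: Lemma 6.2 already records $s_j(\sqrt{\delta_i})$, and the permutation $\sigma_{s_j}$ is determined by the action of $s_j$ on the non-trivial $2$-torsion, which was tabulated inside the proof of Lemma 6.2.

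First I would read off each permutation $\sigma_{s_j}$ from the rows $s_j(\theta_i)$ that appear in the proof of Lemma 6.2. Recalling that $T_i=(\theta_i,0)$, we get $\sigma_{s_1}=\sigma_{s_2}=\mathrm{id}$ (both fix all $\theta_i$), $\sigma_{s_3}=(1\,2)$ (which swaps $\theta_1$ and $\theta_2$ and fixes $\theta_3$), and $\sigma_{s_4}=(2\,3)$ (which swaps $\theta_2$ and $\theta_3$ and fixes $\theta_1$). Consequently $\sigma^{-1}_{s_1}$ and $\sigma^{-1}_{s_2}$ are the identity, while $\sigma^{-1}_{s_3}=(1\,2)$ and $\sigma^{-1}_{s_4}=(2\,3)$.

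Next I would substitute. For $j=1,2$ the indices are unchanged and each ratio is simply $s_j(\sqrt{\delta_i})/\sqrt{\delta_i}$, which by Lemma 6.2 gives the signs $(-1,-1,+1)$ for $s_1$ and $(+1,+1,+1)$ for $s_2$. For $j=3$ the three ratios become $s_3(\sqrt{\delta_2})/\sqrt{\delta_1}$, $s_3(\sqrt{\delta_1})/\sqrt{\delta_2}$, $s_3(\sqrt{\delta_3})/\sqrt{\delta_3}$, and Lemma 6.2 evaluates these to $\sqrt{\delta_1}/\sqrt{\delta_1}$, $\sqrt{\delta_2}/\sqrt{\delta_2}$, $-\sqrt{\delta_3}/\sqrt{\delta_3}$, i.e.\ $(+1,+1,-1)$. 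For $j=4$ the three ratios are $s_4(\sqrt{\delta_1})/\sqrt{\delta_1}$, $s_4(\sqrt{\delta_3})/\sqrt{\delta_2}$, $s_4(\sqrt{\delta_2})/\sqrt{\delta_3}$, which Lemma 6.2 evaluates to $(+1,-1,+1)$.

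There is no real obstacle here; the content is entirely a matching of indices. The only point one has to be careful about is the distinction between $\sigma_{s_j}$ and $\sigma^{-1}_{s_j}$ (the latter is what appears in the formulas), and that in Lemma 6.2 the ambient field is $\mathbb{Q}(\zeta)(\sqrt{\delta_1},\sqrt{\delta_2},\sqrt{\delta_3})$, so that the Galois element $s_j$ genuinely acts on each $\sqrt{\delta_i}$ in a well-defined way. Once these compatibilities are acknowledged, the lemma follows by tabulation, and the displayed table of signs is exactly reproduced.
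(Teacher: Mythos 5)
Your proposal is correct and is exactly the ``direct computation by using Lemma 6.2'' that the paper's one-line proof refers to: read off $\sigma_{s_1}=\sigma_{s_2}=\mathrm{id}$, $\sigma_{s_3}=(1\,2)$, $\sigma_{s_4}=(2\,3)$ from the action on the $\theta_i$ recorded in the proof of Lemma 6.2, then substitute the tabulated values of $s_j(\sqrt{\delta_i})$. All twelve signs check out, so nothing further is needed.
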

\begin{proof} This follows from a direct computation by using Lemma 6.2.
\end{proof}

We now prove Theorem 1.4.
\begin{proof}
We identify each $s \in G_\mathbb{Q}$ with its image under
$\theta: G_\mathbb{Q} \to \GL(E[8]) \subset \GL_2(\mathbb{Z}/8\mathbb{Z})$ and pick generators $s_1,s_2,s_3,s_4$ for
$\GL_2(\mathbb{Z}/8\mathbb{Z})$ as in Lemma 5.3. Then by Lemma 6.3 and
 Shapiro's Lemma, the preimage of $(\delta_1,\delta_2,\delta_3)$
under $H^1(G_\mathbb{Q},M) \cong L^*/(L^*)^2 \hookrightarrow \prod_{j=1}^3 L^*_j/(L^*_j)^2$ is a cocycle $c_s$ which can be described as
$$c_{s_1}=(-1,-1,1),c_{s_2}=(1,1,1),c_{s_3}=(1,1,-1), c_{s_4}=(1,-1,1).$$
By Lemma 5.4, the preimage of $c_{s_j}$ under $H^1(G_\mathbb{Q},H) \cong H^1(G_\mathbb{Q},M)$
is $C_{s_j}$ for each $j=1,2,3,4$, where $C_{s_j},j=1,2,3,4$ are matrices given in Lemma 5.3. But by Lemma 5.3,
$C_{s_j},j=1,2,3,4$ are used to describe the image of $7$ under $(\mathbb{Z}/8\mathbb{Z})^* \to H^1(G_\mathbb{Q},H)$.
This shows that the image of $7$ under
$$(\mathbb{Z}/8\mathbb{Z})^* \to H^1(G_\mathbb{Q},H) \cong H^1(G_\mathbb{Q},M) \cong L^*/(L^*)^2 \hookrightarrow
\prod_{j=1}^3 L^*_j/(L^*_j)^2$$
is $(\delta_1,\delta_2,\delta_3)$.
Then by Lemma 5.5, $\alpha_{7,j}$ can be chosen to be $\delta_j$ for each $j$.
Theorem 1.4 follows from comparing the coefficients
of $1,\theta_j,\theta^2_j$ in the equations
$$\alpha_{7,j}(t-t_{2j-1})(t-t_{2j})=(a_0+a_1\theta_j+a_2\theta^2_j)^2, j=1,2,3.$$
\end{proof}
We now prove Theorem 1.3.
\begin{proof} The connecting map
$(\mathbb{Z}/8\mathbb{Z})^* \to H^1(G_\mathbb{Q},H)$ is a group homomorphism. Therefore, the image of $5$ under
$$(\mathbb{Z}/8\mathbb{Z})^* \to H^1(G_\mathbb{Q},H) \cong H^1(G_\mathbb{Q},M) \cong L^*/(L^*)^2 \hookrightarrow
\prod_{j=1}^3 L^*_j/(L^*_j)^2$$
is the product of the image of $3$ and the image of $7$. So
$\alpha_{3,j}=\alpha_{5,j}\cdot \alpha_{7,j}$ in $L^*_j/(L^*_j)^2$. We have shown in Theorem 4.4 that $\alpha_{5,j}=D$
for each $j=1,2,3$ where $D=-4a^3-27b^2$. Therefore,
$$\alpha_{3,1}=D\alpha_{7,1}=
(\theta_2-\theta_3)^2(\theta_1-\theta_2)^3(\theta_3-\theta_1)^3.$$
Since $\left((\theta_1-\theta_2)(\theta_3-\theta_1)\right)^2$ is a square in $L_1$ so we can take
$\alpha_{3,1}$ to be $(\theta_2-\theta_3)^2(\theta_1-\theta_2)(\theta_3-\theta_1)$. Similarly we can rescale $\alpha_{3,2}$ and $\alpha_{3,3}$ so that
$$\alpha_{3,2}=(\theta_3-\theta_1)^2(\theta_1-\theta_2)(\theta_2-\theta_3)
,\alpha_{3,3}=(\theta_1-\theta_2)^2(\theta_3-\theta_1)(\theta_2-\theta_3).$$
Theorem 1.3 follows from comparing the
coefficients of $1,\theta_j,\theta^2_j$ in the equation
$$\alpha_{3,j}(t-t_{2j-1})(t-t_{2j})=(a_0+a_1\theta_j+a_2\theta^2_j)^2,j=1,2,3.$$
\end{proof}

\begin{flushleft}
{\bf Remark}. The points on $X^r_E(8)$, $r=1,3,5,7$, appear in pairs. In
other words,
if $(t,a_0,a_1,a_2) \in X^r_E(8)$ then $(t,-a_0,-a_1,-a_2) \in X^r_E(8)$
because there is a non-trivial automorphism on $X^r_E(8)$ given by
$$(F,\phi) \mapsto (F,\phi \circ [3]).$$
\end{flushleft}

\begin{flushleft}
{\bf Remark}. Theorem 1.1-1.4 can be generalised to any field of characteristic not equal to $2$ or $3$ by exactly the same method.
\end{flushleft}

\section{The Modular Diagonal Surfaces}
For each $n \ge 1$ and $\epsilon \in (\mathbb{Z}/n\mathbb{Z})^*$, Kani and Schanz classify the type of modular diagonal surface $Z_{n,\epsilon}$ which are constructed as
the quotient of $X(n) \times X(n)$ by
$$\Delta_\epsilon=\{(g,\alpha_\epsilon(g)): g \in \PSL_2(\mathbb{Z}/n\mathbb{Z})\}$$
where $\alpha_\epsilon \in \text{Aut}(\PSL_2(\mathbb{Z}/n\mathbb{Z}))$ is defined by conjugation by the element
$\begin{pmatrix} \epsilon&0\\0&1 \end{pmatrix}$

Each point on the surface
corresponds to a pair of elliptic curves which are $n$-congruent and the Weil pairing is switched to the power of $\epsilon$ [KS].
We are now going to study briefly of these surfaces in terms of the models of $X^r_E(8),r=1,3,5,7$ we got
and explain how it helps to give numerical examples. In particular, we will show that there are infinitely many pairs of non-isogenous elliptic curves which are $8$-congruent with power $r$.
\begin{flushleft}
{\bf Remark}
\end{flushleft}
By a result of Mazur, there are only finitely many $l$ such that rational $l$-isogeny exists and so we only have finitely many sections on the surface which correspond to copies of $X_0(l)$.
To find infinitely many pairs of non-isogenous elliptic curves which are  $8$-congruent, it suffices to find a curve $C$ with infinitely many rational points and a point on $C$ which does not correspond to isogenous curves. Since
the intersection of $C$ with $X_0(l)$ is either $X_0(l)$ or a finite set of points,
so only finitely many points on $C$ correspond to isogenous curves.

For each $j \neq 0,1728,\infty$, there exists a unique elliptic curve $E_a$
of the form $E: y^2=x^3+ax+a$ such that the $j$-invariant $j(E_a)=j$ and so
for $j \neq 0,1728,\infty$ we take the representative $E_a$ in the class
of $\mathbb{C}$-isomorphic elliptic curves containing $E_a$.

We start with the model $S_{a,1}=X_{E_a}(8)$ we got in the previous section and
now we consider $a$ being a variable. Then the irreducible part of $S_{a,1}$
with $a \neq 0,-\frac{27}{4}$ gives an open subscheme of $Z_{8,1}$, which we call $Z_8$. In [KS], it is shown that the $Z_{8,1}$ is a rational surface and we will verify this and give explicit birational map between $S_{a,1}$ and $\mathbb{P}^2$.
\begin{proposition}
The explicit birational map $\mathbb{A}^2_{p,q} \to S_{a,1}$ is given by
$(p,q) \mapsto (a,t,a_0,a_1,a_2)$ where
$$a=\frac{-8(q + 1)(q^2 + 2)^2h_1(p,q)^3}{
(q-1)(q^4 + 3q^2 - 2p)(q^6 + 3q^4 + q^2 - p^2 - 1)^2h_2(p,q)},$$
$$t=-\frac{(q^2 + 2)h_3(p,q)h_4(p,q)}{3(q - 1)(q^4 + 3q^2 - 2p)(q^6 + 3q^4 + q^2 - p^2 - 1)h_5(p,q)}$$
\begin{align*}
h_1(p,q)&=q^6 - \frac{3}{2}q^5 + 3q^4 + \frac{1}{2}q^3p - \frac{9}{2}q^3 - \frac{3}{2}q^2p + \frac{1}{2}q^2 + 3qp - q - \frac{1}{2}p^2 + \frac{1}{2}\\
h_2(p,q)&=q^6 - 3q^5 + 3q^4 + q^3p - 9q^3 - 3q^2p + 6qp - 2q + 2\\
h_3(p,q)&=q^6 - \frac{3}{2}q^5 + 3q^4 + \frac{1}{2}q^3p - \frac{9}{2}q^3 - \frac{3}{2}q^2p + \frac{1}{2}q^2 +3qp - q - \frac{1}{2}p^2 + \frac{1}{2}\\
h_4(p,q)&=q^{10} - 2q^9 +
10q^8 + 2q^7p - 8q^7 - 8q^6p + 26q^6 +12q^5p - 6q^5 \\
&+ q^4p^2 - 32q^4p + 16q^4 - 6q^3p^2+6q^3p - 4q^3 + 11q^2p^2 \\
&- 16q^2p + q^2 + 4qp - 4q +2p^2 + 2\\
h_5(p,q)&=q^6 - 3q^5 + 3q^4 + q^3p - 9q^3 - 3q^2p + 6qp - 2q + 2\\
\end{align*}
and we do not give images $a_0,a_1,a_2$ here due to massive expressions
(we will show how to obtain them in the proof).
\end{proposition}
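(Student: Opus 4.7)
The plan is to construct the birational equivalence by first locating enough rational curves on $S_{a,1}$ to rigidify the parameterisation, and then verifying the explicit formulas symbolically. From the proof of Theorem 4.1, the tautological section $(t,a_0,a_1,a_2) = (\infty,1,0,0)$ (corresponding to $(E_a,[1])$) provides one rational curve on $S_{a,1}$ as $a$ varies, and the involution noted in the remark $(a_0,a_1,a_2)\mapsto(-a_0,-a_1,-a_2)$ produces another through $(\infty,-1,0,0)$. Further sections corresponding to rational cyclic isogenies (the $X_0(\ell)$ curves in the modular diagonal surface $Z_{8,1}$) can be identified; by Mazur's theorem these are finite in number, and they account for the factored denominators appearing in the formula for $a$.

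With these sections understood, I would build the inverse $S_{a,1}\dashrightarrow\mathbb{A}^2_{p,q}$ by taking $p,q$ to be carefully chosen rational functions of $(a,t,a_0,a_1,a_2)$ — most naturally, ratios of the $a_i$ normalised against $t$ and $a$, so that the two tautological sections map to simple loci in the $(p,q)$-plane. Inverting this relation yields $a$ and $t$ as rational functions of $(p,q)$. To recover $a_0,a_1,a_2$, one uses that the three defining equations $f_1,g_1,h_1$ of Theorem 1.1 are quadratic in $(a_0,a_1,a_2)$; after fixing $a$ and $t$, the system can be solved rationally by exploiting the known solution $(1,0,0)$ at $t=\infty$ — this lifts to a rational point of the fibre and reduces the intersection-of-quadrics problem to a linear one after projection from the base point.

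The proof then consists of two explicit checks, best carried out in MAGMA: first, substitute the candidate rational functions for $(a,t,a_0,a_1,a_2)$ into $f_1,g_1,h_1$ and verify these are identically zero in $\mathbb{Q}(p,q)$; second, confirm that the composition with a rational inverse $S_{a,1}\dashrightarrow\mathbb{A}^2_{p,q}$ is the identity on an open dense subset. The main obstacle is symbolic complexity: the displayed polynomials $h_i(p,q)$ already have degrees up to ten in $(p,q)$, and the suppressed formulas for $a_0,a_1,a_2$ will be substantially larger. Identifying the specific coordinates $(p,q)$ that yield manageable expressions is the conceptually delicate step — a generic projection would produce unwieldy denominators — but once the right choice is fixed the remaining verifications are routine if tedious polynomial identity checks.
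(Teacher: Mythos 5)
Your verification plan (substitute the candidate formulas into $f_1,g_1,h_1$ and check the identities in $\mathbb{Q}(p,q)$) is fine as a final check, but the construction you propose for actually producing the map has a gap that I do not think can be repaired as stated. You propose to fix $a$ and $t$ and then solve the three quadrics for $(a_0,a_1,a_2)$ ``rationally, by projection from the base point $(1,0,0)$.'' Over a fixed $(a,t)$ the system $f_1=g_1=h_1=0$ is zero-dimensional --- it is the fibre of the degree-$8$ cover $X_{E_a}(8)\to X_{E_a}(4)$ --- so no projection-from-a-point device is available: that trick linearises a single quadric (a conic or quadric hypersurface) containing a rational point, not an intersection of three quadrics cutting out eight points. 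Moreover the tautological point $(1,0,0)$ lies only on the special fibre $t=\infty$; it does not lift to a section over a generic $(a,t)$, and the generic fibre is a single Galois orbit, so no rational section of $S_{a,1}$ over the $(a,t)$-plane exists. Fibring over $(a,t)$ is therefore the wrong direction, and the remarks about $X_0(\ell)$ sections explaining the denominators, while plausible, do not supply a substitute construction.

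The paper's proof goes the opposite way. After the coordinate change $(a,t,a_0,a_1,a_2)\mapsto(a,\tfrac{1}{3a_2},\tfrac{a_0}{a_2},\tfrac{a_1}{a_2},\tfrac{-t}{a_2})$, the variables $a$ and $t$ occur linearly in two of the three resulting equations, so they are eliminated, leaving a single affine surface $f'=0$ in $(a_0,a_1,a_2)$. A further explicit substitution in $a_0$ puts this surface in the form $a_0^2=G(a_1,a_2)$ with $G$ quadratic in $a_2$, i.e.\ a conic over $\mathbb{Q}(a_1)$, which carries the visible rational point $a_0=\frac{3a_1-\frac{3}{2}a_1^2+\frac{1}{2}a_1^3}{a_1-1}$, $a_2=-\frac{1}{a_1-1}$. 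The standard parametrisation of a conic with a rational point then yields $(p,q)$, and unwinding the intermediate substitutions gives the displayed formulas for $a$ and $t$ (and the suppressed ones for $a_0,a_1,a_2$). The missing idea in your proposal is precisely this elimination of $a,t$ and the recognition of a conic-bundle structure over $\mathbb{A}^1_{a_1}$ with a section, rather than over the $(a,t)$-plane where no section exists.
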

\begin{proof}
We start with $S_{a,1}$ (setting $b=a$ in Theorem 1.1) and $S_{a,1}$ is birational to
the surface defined by the following equations:
\begin{align*}
f'_1&=-a + 2a_0 + a^2_1+2a^2_2,\\
f'_2&=-2aa_1 - a + 2a_0a_1-2ta_2,\\
f'_3&=-2aa_1 + a^2_0+aa^2_2-t^2.\\
\end{align*}
by $(a,t,a_0,a_1,a_2) \mapsto (a,\frac{1}{3a_2},\frac{a_0}{a_2},\frac{a_1}{a_2},\frac{-t}{a_2})$.
We use $f'_1,f'_2$ to write $a,t$ in terms of $a_0,a_1,a_2$
$$a=2a_0+a^2_1+2a^2_2,t=\frac{-2aa_1-a+2a_0a_1}{2a_2}.$$
Then we have
\begin{align*}
f'&=-a^2_0a^2_1 - 2a^2_0a_1 + a^2_0a^2_2 - a^2_0 - 2a_0a^4_1 - 3a_0a^3_1 -
    4a_0a^2_1a^2_2 - a_0a^2_1 - 10a_0a_1a^2_2\\
&+ 2a_0a^4_2 - 2a_0a^2_2 - a^6_1 -a^5_1 - 4a^4_1a^2_2 - \frac{1}{4}a^4_1 - 6a^3_1a^2_2 - 3a^2_1a^4_2 - a^2_1a^2_2\\
&-8a_1a^4_2 + 2a^6_2 - a^4_2.
\end{align*}
Sending $a_0$ to $\frac{Aa_0+B}{a^3_2}$ and $a_1$ to $\frac{a_1}{a_2}$ where
\begin{align*}
A&=-a^2_1 - 2a_1 + a^2_2 - 1,\\
B&=-2a^4_1 - 3a^3_1 - 4a^2_1a^2_2 - a^2_1 - 10a_1a^2_2 + 2a^4_2 - 2a^2_2
\end{align*}
we conclude the surface is birational to the vanishing of
$$h'=a^2_0-(a^6_1a^2_2 + 3a^5_1a_2 + 3a^4_1a^2_2 + \frac{9}{4}a^4_1 + 9a^3_1a_2 + a^2_1a^2_2 + 9a^2_1 +2a_1a_2 - a^2_2 + 1).$$
This is a genus zero curve defined over $\mathbb{Q}(a_1)$ with
a rational point given by
$$a_0=\frac{3a_1-\frac{3}{2}a^2_1+\frac{1}{2}a^3_1}{a_1-1},
a_2=-\frac{1}{a_1-1}.$$
Hence using standard parametrization of the genus zero curve with a rational
point, together with the intermediate steps we worked above we obtain the
required parametrization.
\end{proof}
The proposition allows one to classify all elliptic curves $E$
such that the curve $X_E(8)$ has non-trivial rational points.
\begin{corollary}
There exist infinitely many pairs of non-isogenous elliptic curves
which are directly $8$-congruent.
\end{corollary}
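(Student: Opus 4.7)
The plan is to exploit Proposition 7.1, which exhibits a birational map $\mathbb{A}^2_{p,q} \to Z_8$ over $\mathbb{Q}$. In particular $Z_8(\mathbb{Q})$ is Zariski dense, and every non-cuspidal rational point on $Z_8$ yields a pair $(E_a, F)$ of elliptic curves over $\mathbb{Q}$ which are $8$-congruent with power $1$ (together with a specific $G_\mathbb{Q}$-equivariant isomorphism $E_a[8] \cong F[8]$ preserving the Weil pairing). What remains is to show that we can produce infinitely many such pairs with $E_a$ and $F$ non-isogenous, rather than infinitely many points lying in the isogeny locus.

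Following the strategy in the remark preceding the corollary, I invoke Mazur's theorem on rational isogenies: only finitely many integers $\ell$ can occur as the degree of a rational cyclic $\ell$-isogeny between elliptic curves over $\mathbb{Q}$. Hence inside $Z_{8,1}$ the locus parametrising \emph{isogenous} $8$-congruent pairs is a finite union of irreducible curves, each isomorphic to some $X_0(\ell)$. I then pick a rational line $L \subset \mathbb{A}^2_{p,q}$ and let $C \subset Z_8$ denote its image under the birational map of Proposition 7.1. Since $L(\mathbb{Q})$ is infinite, so is $C(\mathbb{Q})$.

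If $C$ is not contained in any of the finitely many $X_0(\ell)$ loci, then each intersection $C \cap X_0(\ell)$ is a finite set, and hence all but finitely many points of $C(\mathbb{Q})$ correspond to non-isogenous $8$-congruent pairs, which proves the corollary. To verify non-containment it suffices to exhibit a single rational point on $C$ whose associated pair is non-isogenous: substitute convenient rational values of $(p,q)$ into the formulae of Proposition 7.1 to recover $(a, t, a_0, a_1, a_2)$, read off $E_a$ and the congruent curve $F$ from the explicit families in the appendix via the forgetful map $X_{E_a}(8) \to X_{E_a}(4)$, and check non-isogeny by comparing $j$-invariants together with the traces of Frobenius $a_p$ at a few small primes of good reduction.

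The main obstacle is precisely this final computational verification. A priori it is conceivable that the line $L$ we pick is chosen so poorly that $C$ lands inside some $X_0(\ell)$; however, since the isogeny locus is a proper closed subvariety of the $2$-dimensional surface $Z_8$, a generic $L$ will avoid this, and a single explicit non-isogenous example suffices to confirm that the chosen $L$ is generic enough. Once such an example is exhibited, the remainder of the argument is pure dimension counting and the existence of infinitely many non-isogenous $8$-congruent pairs follows at once.
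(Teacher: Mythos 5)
Your argument is correct and is essentially the paper's own proof: the paper likewise combines the rationality of $S_{a,1}$ from Proposition 7.1 with the preceding Remark (Mazur's finiteness of rational $\ell$-isogenies, hence a finite union of $X_0(\ell)$ loci meeting a suitable rational curve $C$ in only finitely many points). Your added discussion of choosing a generic line and certifying it by one explicit non-isogenous point just makes explicit what the paper leaves implicit.
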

\begin{proof}
This follows immediately from the Remark above and Proposition 6.1.
\end{proof}

We now consider a similar construction of the modular surface corresponding
to $X^r_E(8)$ which helps to find numerical examples for $r=3,5,7$. We start with the model $X^r_E(8)$, set $b=a$ and view $a$ as a variable. We call
the resulting variety $S_{a,r}$.
\begin{proposition}
There exists infinitely many pairs of non-isogenous elliptic curves
which are $8$-congruent with power $5$.
\end{proposition}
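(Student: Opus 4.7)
My plan is to mirror exactly the strategy used for Corollary 6.3, replacing the model $S_{a,1}$ by the model $S_{a,5}$. First I would start from the equations $f_5=g_5=h_5=0$ of Theorem 1.2 defining $X^5_E(8)$, specialise to $b=a$ (so $D=-a^2(4a+27)$), and view $a$ alongside $(t,a_0,a_1,a_2)$ as a new coordinate on a surface $S_{a,5}\subset\mathbb{A}^5$. Each non-cuspidal $\mathbb{Q}$-point of $S_{a,5}$ produces a pair $(E_a,F)$ of elliptic curves that are $8$-congruent with power $5$, so it suffices to exhibit infinitely many rational points on $S_{a,5}$ not lying on the finitely many $X_0(\ell)$-sections.

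Next, in direct analogy with the proof of Proposition 6.1, I would try to show that $S_{a,5}$ (or at least one of its irreducible components) is birational to $\mathbb{P}^2$. The natural first step is to use $f_5$ and $g_5$, which are linear in $a_0$ and in $t$ respectively, to eliminate $a_0$ and $t$, so that $h_5=0$ becomes a single polynomial equation in the four variables $(a,a_1,a_2)$ after clearing denominators. This cuts out a surface in $\mathbb{A}^3_{a,a_1,a_2}$, and a clever change of variables, modelled on the substitutions $a_0\mapsto (Aa_0+B)/a_2^3$, $a_1\mapsto a_1/a_2$ employed in Proposition 6.1, should reduce it to a conic bundle over $\mathbb{A}^1_{a_1}$. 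I would then look for a rational section of this conic bundle and conclude rationality by the usual genus-zero parametrisation, producing the required birational map $\mathbb{A}^2\to S_{a,5}$.

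Once any such parametrisation is in hand, Corollary 6.3 applies verbatim: by Mazur's theorem there are only finitely many primes $\ell$ admitting a rational $\ell$-isogeny, hence only finitely many $X_0(\ell)$-sections of $Z_{8,5}$; the image of $\mathbb{A}^2$ meets each in at most a curve, and the complement still carries infinitely many $\mathbb{Q}$-points giving non-isogenous $8$-congruent pairs with power $5$. Alternatively, it would suffice to find a single rational curve on $S_{a,5}$ with infinitely many rational points that is not contained in any isogeny locus, e.g.\ by fixing one free parameter judiciously.

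The main obstacle is the rationality step itself: unlike the case $r=1$, the tautological section given by Lemma 4.2 is only defined over $\mathbb{Q}(E[2])$ and so does not furnish an obvious $\mathbb{Q}$-point of $S_{a,5}$ fibred over $\mathbb{A}^1_a$ to start the parametrisation from. One has to produce a rational point on the conic bundle by hand, for instance by searching for small-height solutions or by exploiting the involution $(t,a_0,a_1,a_2)\mapsto(t,-a_0,-a_1,-a_2)$ noted in the remark after Theorem 1.3 together with the $\GL_2(\mathbb{Z}/8\mathbb{Z})$-symmetry of $X(8)$. The result of Kani--Schanz cited in the paper guarantees that $Z_{8,5}$ is rational, so such a section must exist; writing it down explicitly is the only real computational hurdle, and it is a finite search once the conic-bundle structure is exposed.
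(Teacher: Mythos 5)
Your closing argument (Mazur's finiteness of rational $\ell$-isogenies, so a curve with infinitely many rational points on $S_{a,5}$ meets the isogeny sections in only finitely many points unless it is contained in one) is exactly the paper's, but your primary route to producing such a curve has a genuine gap. You assert that ``the result of Kani--Schanz cited in the paper guarantees that $Z_{8,5}$ is rational.'' The paper makes no such claim: it cites [KS] only for the rationality of $Z_{8,1}$, notes that $Z_{8,7}$ is of general type, and the fact that the author settles for a single genus-$0$ curve on $S_{a,5}$ rather than a birational map $\mathbb{A}^2 \to S_{a,5}$ as in Proposition 7.1 reflects that $Z_{8,5}$ is \emph{not} among the rational surfaces in the Kani--Schanz classification. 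So the elimination/conic-bundle programme you outline cannot be expected to end in a parametrisation of the whole surface; and, as you yourself observe, even the starting $\mathbb{Q}$-point is missing, since the tautological point of Lemma 4.2 is only $\mathbb{Q}(E[2])$-rational. Your plan as its main line would therefore fail.

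Your fallback sentence --- find a single rational curve on $S_{a,5}$ with infinitely many rational points not contained in an isogeny locus --- is precisely what the paper does, but it is the entire content of the proof and you leave it unexecuted. The paper takes the section $a_2=0$ of $S_{a,5}$, exhibits an explicit parametrisation of the resulting genus-$0$ curve by $\mathbb{A}^1_p$ (writing $a$, $t$, $a_0$, $a_1$ as explicit rational functions of $p$), and then verifies that the specialisation $p=2$ yields the non-isogenous pair $20736p1$ and $103680bv1$. This last check is not optional: without it the curve could a priori lie inside a copy of some $X_0(\ell)$ (the paper even computes where $X_0(5)$ sits inside $S_{a,5}$). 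As it stands, your proposal identifies the right finiteness argument but supplies neither the curve nor the non-isogenous specialisation, so it is a plan rather than a proof.
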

\begin{proof}
Consider the genus 0 curve on $S_{a,5}$ parameterized by $\mathbb{A}^1_p$, defined by $a_2=0$,
\begin{align*}
a&=\frac{27}{8}\frac{(p^2-12p+12)^2}{(p^2-12)^2},
t=-\frac{1}{2}\frac{p^2 - 12p + 12}{p^2-12},\\
a_0&=\frac{-243}{32}\frac{(p^2-12p+12)^3(p^2 - 4p + 12)}{(p^2-12)^4},\\
a_1&=\frac{81}{8}\frac{(p^2 - 12p + 12)^2(p^2 - 4p + 12)}{(p^2-12)^3}.
\end{align*}
Setting $p=2$ we obtain a pairs of elliptic curves
\begin{align*}
E:& y^2 = x^3 + 54x + 216 & \quad 20736p1\\
F:& y^2 = x^3 - 522x + 18936 &  \quad 103680bv1\\
\end{align*}
which are non-isogenous and $8$-congruent with power $5$.
\end{proof}
If $E$ is $5$-isogenous to $F$ then $F$ is $8$-congruent to $E$
with power $5$ and therefore we have a copy of $X_0(5)$
on $S_{a,5}$ which corresponds to pairs of of $5$-isogenous curves.
Let $E_r:y^2=x^3+a_rx+b_r$ be the families of elliptic curves parameterized by $X_0(5)$, where
\begin{align*}
a_r&=-27r^4 + 324r^3 - 378r^2 - 324r -27,\\
b_r&=54r^6 - 972r^5 + 4050r^4 + 4050r^2 + 972r + 54\\
\end{align*}
The $F_r$, which is $5$-isogenous to $E_r$, is
$$y^2+(1-r)xy-ry=x^3-rx^2-5t(r^2+2r-1)x-r(r^4+10r^3-5r^2+15r-1)$$
and has $j$-invariant
$\frac{(r^4 + 228r^3 + 494r^2 - 228r + 1)^3}{r(r^2 - 11r - 1)^5}$.
By considering the $j$-map $X^4_E(8) \to X(1)$ we obtain the value of $t$
which corresponds to $F_r$. Then the point on $X^5_E(8)$
corresponding to $F_r$ is
\begin{align*}
t&=r^2+1,~a_0=-1944r(r^3 - 11r^2 + 7r + 1)(r^3 - 7r^2 - 11r - 1),\\
a_1&=324r(r^2 - 12r - 1)(r^2 + 1),~a_2=108r(r^2-6r-1).
\end{align*}
To find a copy of $X_0(5)$ on $S_{a,5}$ we replace both $a_r,b_r$ by
$a^3_r/b^2_r$ and rescale the coordinates $t,a_0,a_1,a_2$.
Similarly we can find points corresponding to $3$-isogenous curves on $S_{a,3}$ and points corresponding to $7$-isogenous curves on
$S_{a,7}$.
\begin{proposition}
There are infinitely many pairs of non-isogenous elliptic curves which are 8-congruent with power 3.
\end{proposition}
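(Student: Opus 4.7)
The plan is to mimic the proof of Proposition 7.3 essentially verbatim. I would begin from the surface $S_{a,3}$ obtained by taking the model of $X^3_E(8)$ from Theorem 1.3, setting $b=a$, and treating $a$ as a variable; this is an open subscheme of the Kani--Schanz diagonal surface $Z_{8,3}$ parametrising pairs of elliptic curves that are $8$-congruent with power $3$.

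First I would search for an explicit genus-zero curve $C \subset S_{a,3}$ carrying infinitely many rational points. The natural first attempt, copying the strategy that worked for $S_{a,5}$, is to intersect $S_{a,3}$ with the hyperplane $a_2=0$. With $a_2=0$ and $b=a$, the equation $f_3=0$ becomes a relation one can solve for $a_1$ (up to sign), the equation $g_3=0$ then expresses $a_0$ as a rational function of the remaining variables, and $h_3=0$ cuts out a curve in the $(a,t)$-plane. If this curve has geometric genus zero I would parameterise it using a small rational point, for example one produced by specialising one of the numerical examples of an $8$-congruent pair with power $3$ obtained from the paper. If $a_2=0$ does not yield a rational component, I would try nearby slices such as $a_1=0$ or $a_2 = \lambda a_1$ with $\lambda \in \mathbb{Q}$ until a rationally parameterisable component appears.

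Once such a parameterisation $\mathbb{A}^1_p \to S_{a,3}$ is in hand, the argument closes as in Proposition 7.3: by the Remark preceding Corollary 7.2, Mazur's theorem permits only finitely many $l$ for which a rational $l$-isogeny can exist, so only finitely many copies of $X_0(l)$ sit inside $Z_{8,3}$, and hence $C$ either coincides with one of these $X_0(l)$ or meets the isogeny locus in finitely many points. To exclude the first possibility I would specialise $p$ to a small integer $p_0$, read off the resulting pair $(E,F)$ from the parameterisation together with the family of $X^3_E(4)$ recorded in the appendix, and verify non-isogeny numerically in MAGMA by comparing conductors or by checking traces of Frobenius at a handful of small primes of good reduction.

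The main obstacle will be the first step: locating a genus-zero slice of $S_{a,3}$. Unlike the equations of Theorem 1.2, where the dependence on $a,b$ enters uniformly through the discriminant $D$, the equations $f_3, g_3, h_3$ of Theorem 1.3 contain linear and quadratic terms in $t$ weighted inhomogeneously by $a$ and $b$, so after setting $b=a$ there is no a priori guarantee that the simple hyperplane $a_2=0$ yields a rationally parameterisable curve. If it does not, a brief computer-algebra search over low-degree linear sections, or an approach modelled on Proposition 7.1 (eliminate two of the variables using two of the three defining equations and analyse the resulting surface fibred over one of the remaining coordinates), should produce a suitable $C$, after which the parameterisation and the numerical non-isogenous example are routine.
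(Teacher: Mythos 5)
Your plan matches the paper's proof: the paper exhibits an explicit genus-zero curve on $S_{a,3}$ parameterised by $\mathbb{A}^1_r$, checks that the specialisation at $r=0$ gives a non-isogenous pair, and concludes via the Mazur/$X_0(l)$ remark exactly as you describe. The only detail to note is that the paper's rational curve lies in the slice $a_1=0$ (one of your listed fallbacks) rather than $a_2=0$, so your contingency plan is in fact the route taken.
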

\begin{proof} We search for a genus $0$ curve on $S_{a,3}$. Consider the genus 0 curve on $S_{a,3}$ parameterised by $\mathbb{A}^1_r$
\begin{align*}
a&=-\frac{135}{4}\frac{(r^2 - 2r - \frac{15}{8})(r^2 + \frac{1}{8})}{(r^2 - r + \frac{11}{8})(r^2 + r + \frac{3}{8})(r^2 + 2r - \frac{1}{8})},\\
t&=\frac{1}{2}\frac{(r^2 - 2r - \frac{15}{8})(r^2 + \frac{1}{8})}{(r^2 - r + \frac{11}{8})(r^2 + r + \frac{3}{8})},\\
a_0&=-135\frac{(r^2 - 2r + \frac{21}{8})(r^2 - \frac{1}{2}r - \frac{3}{8})
(r^2+\frac{1}{8})^2(r^2+\frac{1}{2}r+\frac{5}{8})(r^2+\frac{6}{5}r+\frac{17}{40})}
{(r^2-r+\frac{11}{8})^2(r^2+r+\frac{3}{8})^2(r^2+2r-\frac{1}{8})^3},\\
a_1&=0,a_2=6\frac{(r^2 - \frac{1}{2}r - \frac{3}{8})^2(r+\frac{1}{8})}{(r^2 - r + \frac{11}{8})(r^2+r+\frac{3}{8})(r^2+2r-\frac{1}{8})}.
\end{align*}
Then the curves corresponding to $r=0$ are non-isogenous and the result follows.
\end{proof}
The families of curves parameterized by $X_0(3)$ are $E_r: y^2=x^3+a_rx+b_r$ where
$a_r=18r-27,b_r=9r^2-54r+54$.
So the curve $F_r$ which is $3$-isogenous to $E_r$ corresponds to
$$t=1-r,a_0=36r^2 - 126r + 108,a_1=15r-18,a_2=3r-6$$
on $X^3_{E_r}(8)$.
\begin{proposition}
There are infinitely many pairs of non-isogenous elliptic curves which are 8-congruent
with power 7.
\end{proposition}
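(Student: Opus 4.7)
The plan is to mimic the proofs of Propositions 7.3 and 7.5, where in each case a genus $0$ subcurve of the relevant surface $S_{a,r}$ was exhibited and then combined with the Remark preceding Proposition 7.1 (based on Mazur's finiteness of rational $\ell$-isogenies) to produce infinitely many non-isogenous $8$-congruent pairs. Concretely, I would start from the model $S_{a,7}$ obtained from Theorem 1.4 by setting $b=a$ and treating $a$ as a free parameter, so $S_{a,7}$ is the $2$-dimensional affine variety in $\mathbb{A}^5_{a,t,a_0,a_1,a_2}$ cut out by $f_7=g_7=h_7=0$.

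Next, I would search for a rational curve inside $S_{a,7}$ by imposing a simple slicing condition — for instance, setting one of $a_0,a_1,a_2$ to $0$, or fixing a linear relation among them — in direct analogy with the choice $a_1=0$ used for $S_{a,3}$ and $a_2=0$ used for $S_{a,5}$. Because $f_7,g_7,h_7$ are quadratic in the $a_i$, such a restriction drops $S_{a,7}$ to a curve whose geometric genus is small and can be computed in MAGMA; one then picks a slice producing a genus $0$ component and writes down an explicit rational parameterization $\mathbb{A}^1_r \to S_{a,7}$ giving $a,t,a_0,a_1,a_2$ as rational functions of $r$.

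Having produced such a parameterization, I would substitute a small integer value of the parameter to write down an explicit pair $(E_a,F)$ of elliptic curves over $\mathbb{Q}$ which, by construction of $X^7_E(8)$ in Theorem 1.4, are $8$-congruent with power $7$. Verifying that this particular pair is non-isogenous reduces to checking (via LMFDB lookup or direct MAGMA computation) that the two $j$-invariants do not lie on any $X_0(\ell)$ with $\ell \in \{2,3,5,7,11,13,17,19,37,43,67,163\}$, i.e.\ the finite list allowed by Mazur. Once one such non-isogenous witness is found, the curve $C$ carrying the rational parameterization cannot be contained in any $X_0(\ell)$ component of $S_{a,7}$, so its intersection with each such $X_0(\ell)$ is finite; all but finitely many of the infinitely many rational points on $C$ therefore yield non-isogenous $8$-congruent pairs with power $7$, proving the proposition.

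The main obstacle is the computational step of locating a slice whose resulting curve is actually of genus $0$ rather than $1$ or higher; a priori the bilinear shape of $f_7,g_7,h_7$ suggests several candidate slicings, but only certain ones will give rational curves, and parameterizing such a curve explicitly is routine but tedious. One slight additional subtlety is ruling out the possibility that the genus $0$ component found consists entirely of $\ell$-isogenous pairs (as would happen if $C$ coincided with an $X_0(\ell)$ embedded in $S_{a,7}$, cf.\ the discussion after Proposition 7.3), but this is immediately excluded by exhibiting a single non-isogenous rational point on $C$.
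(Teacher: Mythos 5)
Your overall skeleton is the right one and matches the paper's: produce a curve $C$ on $S_{a,7}$ with infinitely many rational points, exhibit one point giving a non-isogenous pair, and invoke the remark based on Mazur's finiteness of rational $\ell$-isogenies to conclude that all but finitely many points of $C$ give non-isogenous pairs. However, the specific plan of locating a \emph{genus $0$} slice is where your argument is likely to fail, and it is not what the paper does. The surface $Z_{8,7}$ is of general type (as the paper remarks immediately after this proposition), so rational curves on $S_{a,7}$ are scarce and there is no reason a simple linear slicing should produce one; indeed the paper's own slice $t=0$ yields a curve with two components whose reduced component $C_1$ has genus $1$, not $0$. The paper then shows $C_1$ is isomorphic to the elliptic curve $y^2 = x^3 + x^2 - 538x + 4628$, which has rank $1$, and uses its infinitely many rational points together with an explicit non-isogenous witness (the point with $a=-135/32$, $t=0$). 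Your proposal, as written, has no fallback if every tractable slice has positive genus, which is exactly the situation here.

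The repair is small but essential: broaden the search to allow positive-genus components and, for a genus $1$ component with a visible rational point, compute its Mordell--Weil rank; positive rank suffices for the "infinitely many rational points" hypothesis of the remark. With that modification your non-isogeny check and the finiteness-of-intersection-with-$X_0(\ell)$ argument go through exactly as in the paper. (A minor aside: to certify a single explicit pair is non-isogenous you need not appeal to Mazur's list of isogeny degrees; comparing conductors or traces of Frobenius, as the paper does in its examples, is simpler and already conclusive.)
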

\begin{proof} We start with the model $S_{a,7}$ and we take the section $t=0$.
Then we obtain a curve $C$ which has 2 irreducible components, one of which is not reduced. We take the reduced one, say $C_1$, which is a genus 1 curve and it has a rational point
$$p:a=-9,t=0,a_0=3,a_1=1,a_2=0$$
and so $C_1$ is isomorphic to
$$C': y^2 = x^3 + x^2 - 538x + 4628$$
which has rank $1$. Finally, we search a point on $C_1$ given by
$$a=-\frac{135}{32},t=0, a_0=\frac{75}{32}, a_1=\frac{5}{4},a_2=\frac{-1}{3}$$
and this point gives a pair of non-isogenous curves
$$E_1:y^2 = x^3 - 1080x - 17280,E_2:y^2 = x^3 + 7931250x - 8519850000.$$
\end{proof}
The families of curves parameterized by $X_0(7)$ are $E_r:y^2=x^3+a_rx+b_r$ where
\begin{align*}
a_r&=-27r^8 + 324r^7 - 1134r^6 + 1512r^5 -945r^4 + 378r^2 - 108r - 27,\\
b_r&=54r^{12} - 972r^{11} + 6318r^{10} -19116r^9 + 30780r^8 - 26244r^7\\
 &+ 14742r^6 - 11988r^5 + 9396r^4 - 2484r^3 - 810r^2 + 324r + 54,\\
\end{align*}
and so the curve $F_r$ which is $7$-isogenous to $E_r$ corresponds to
\begin{align*}
t&=\frac{r^6 - 7r^5 - 14r^4 + 53r^3 - 34r^2 + r + 1}{r^2 - r + 1},\\
a_0&=12(-r^8 + 15r^7 - 72r^6 + 125r^5 - 113r^4 + 48r^3 + 5r^2 - 7r - 1),\\
a_1&=\frac{2r^6 - 26r^5 + 80r^4 - 50r^3 - 20r^2 + 14r + 2}{r^2 - r + 1},
a_2=\frac{2}{3},
\end{align*}
on $X^7_{E_r}(8)$.
\begin{flushleft}
\bf{Remark}
\end{flushleft}
The surface $Z_{8,7}$ is a surface of general type, and one might expect to take more effort to find rational points on $S_{a,7}$.

We now give some examples with small conductors.
\begin{flushleft}
\bf{Example}
\end{flushleft}
By searching rational points on $X^5_E(8)$ we obtain a curve $F$ which is non-isogenous to $E$ and
$8$-congruent to $E$ with power $5$, where
\begin{align*}
E: &y^2 = x^3 + x^2 - 17x - 33               &\quad   96a2\\
F: &y^2 = x^3 - 8x^2 - 333056x + 59636736   &\quad   1056d2
\end{align*}
We give the traces of Frobenius of the curves
at first several places
\begin{center}
    \begin{tabular}{ | l | l | l | l| l |l|l|l|l|l|l|l|}
    \hline
  Prime & 2  & 3& 5& 7 & 11 &13 & 17& 19& 23& 29 &31 \\ \hline

  Traces of Frobenius(E) &0&1&2&-4&4&-2&-6&-4&0&2&4 \\ \hline
  Traces of Frobenius(F) &0&1&2&4&-1&-2&2&4&0&-6&4\\ \hline
\end{tabular}
\end{center}
and they are congruent mod $8$ except $p=11$. Further, at $p=3$, both curves
have split multiplicative reduction and we have $v_3(\Delta_E)=1,v_3(\Delta_F)=5$ which agrees with Proposition 2 in [KO].

\begin{flushleft}
\bf{Example}
\end{flushleft}
By searching rational points on $X^3_E(8)$ we obtain a curve $F$ which is non-isogenous to $E$ and
$8$-congruent to $E$ with power $3$, where
\begin{align*}
E:  &y^2 + xy + y = x^3 - x^2 - 2x            &\quad 99a1\\
F:  &y^2 = x^3 - 975159243x + 11681563877190  &\quad 1683b1
\end{align*}
We give the traces of Frobenius of the curves at first several places
\begin{center}
    \begin{tabular}{ | l | l | l | l| l |l|l|l|l|l|l|l|l|}
    \hline
  Prime & 2 & 3&5&7&11&13&17&19&23&29&31\\ \hline

  Traces of Frobenius(E)& 7&0&4&6&7&6&2&2&4&2&4   \\ \hline
  Traces of Frobenius(F)& 7&0&4&6&-7&6&1&2&4&2&4  \\ \hline
 \end{tabular}
\end{center}
and they are congruent mod $8$ except $p=11,17$. Further at $p=11$ both curves
have split multiplicative reduction and we have $v_{11}(\Delta_E)=1$,
$v_{11}(\Delta_F)=3$ which agree with Proposition 2 in [KO].

We made some effort to minimise and reduce the equation $X_E(8)$ and find some examples of triples of elliptic curves which are directly 8-congruent to each other.
\begin{example}
Elliptic curves $129a1,645e1$ and $23349a1$ are directly 8-congruent to each other.
\end{example}
\begin{example}
Elliptic curves $561a1,235059g1$ and $171105h1$ are directly 8-congruent to each other.
\end{example}

\begin{flushleft}
\bf{Appendix}
\end{flushleft}
Let $E:y^2=x^3+ax+b$ be an elliptic curve. Let $c_4=-\frac{a}{27},c_6=-\frac{b}{54}$.  The families of elliptic curves parameterised by $X^3_E(4)$ are
$$E^{\Delta_E}_t:y^2=x^3-27\Delta^2_Ea_E(t)x-54\Delta^3_Eb_E(t)$$
where
\begin{align*}
a_E(t)&=c_4t^8 + 8c_6t^7 + 28c^2_4t^6 + 56c_4c_6t^5 + (-42c^3_4 +
112c^2_6)t^4\\
& +56c^2_4c_6t^3 + (252c^4_4 - 224c_4c^2_6)t^2 + (264c^3_4c_6 - 256c^3_6)t +
    (81c^5_4 - 80c^2_4c^2_6),\\
b_E(t)&=c_6t^{12} + 12c^2_4t^{11} + 66c_4c_6t^{10} + (44c^3_4 + 176c^2_6)t^9 +
    495c^2_4c_6t^8 \\
&+ 792c^4_4t^7 + 924c^3_4c_6t^6+ (-2376c^5_4 +3168c^2_4c^2_6)t^5 + (-5841c^4_4c_6 + 6336c_4c^3_6)t^4\\
& + (-1188c^6_4 -4224c^3_4c^2_6 + 5632c^4_6)t^3 + (-4158c^5_4c_6 + 4224c^2_4c^3_6)t^2 \\
&+(-2916c^7_4 + 4464c^4_4c^2_6 - 1536c_4c^4_6)t+ (-1215c^6_4c_6 +2240c^3_4c^3_6 - 1024c^5_6).
\end{align*}

{\bf University of Cambridge, DPMMS, Centre for Mathematical Sciences, Wilberforce
Road, Cambridge CB3 0WB, UK}\\
e-mail address: zc231@cam.ac.uk
\end{document}